\theoremstyle{plain}
\newtheorem{theorem}{Theorem}[section]
\newtheorem{proposition}[theorem]{Proposition}
\newtheorem{lemma}[theorem]{Lemma}
\theoremstyle{definition}
\newtheorem{definition}{Definition}[section]
\theoremstyle{remark}
\newtheorem{remark}{\textbf{Remark}}[section]
\theoremstyle{example}
\numberwithin{equation}{section}
\title{Spectral Integrals of Bernoulli Generalized Functionals}
\author{Jing Zhang, Caishi Wang\footnote{Author to whom correspondence should be addressed.
Electronic addresses: wangcs@nwnu.edu.cn}, Lu Zhang, Lixia Zhang\\
School of Mathematics and Statistics,
Northwest Normal University\\
Lanzhou 730070, People's Republic of China}
\begin{document}
\maketitle

\noindent\textbf{Abstract.}\ \
Let $\mathcal{S}\subset \mathcal{L}^2 \subset \mathcal{S}^*$ be the Gel'fand triple over the  Bernoulli space,
where elements of $\mathcal{S}^*$ are called Bernoulli generalized functionals.
In this paper, we define integrals of Bernoulli generalized functionals
with respect to a spectral measure (projection operator-valued measure)
in the framework of $\mathcal{S}\subset \mathcal{L}^2 \subset \mathcal{S}^*$,
and examine their fundamental properties.
New notions are introduced, several results are obtained and examples are also shown.
\vskip 2mm

\noindent\textbf{Keywords.}\ \ Bernoulli functional; Gel'fand triple; Spectral measure; Spectral integral; White noise theory.
\vskip 2mm

\noindent\textbf{Mathematics Subject Classification.}\ \ 60H40; 81S25.

\section{Introduction}\label{sec-1}

A spectral measure is a projection operator-valued measure defined on a measurable space,
where a projection operator means an orthogonal projection operator on some Hilbert space.
Given a bounded measurable function $f$ and a spectral measure $\pi$ on a measurable space $(\mathfrak{X},\mathscr{F})$,
one can use the usual idea of Lebesgue integration to define an integral $\int_{\mathfrak{X}}fd\pi$,
which is a bounded linear operator on the Hilbert space associated with the spectral measure $\pi$ (see, e.g. \cite{parth}).
Such integrals are usually known as spectral integrals, which play an important role in the theory of operators \cite{helson, parth}.

Generalized functions (functionals) are continuous linear functionals on fundamental function spaces.
For instance, Schwartz generalized functions are continuous linear functionals on the Schwartz rapidly decreasing function space \cite{gel-1},
and Hida generalized functionals are continuous linear functionals on the Hida testing functional space \cite{hida,huang,kuo,obata}.
As is well known, generalized functions (functionals) have wide application in mathematical physics
(see, e.g. \cite{albe,barhoumi,benth,ji,huyz,lee,nunno} and references therein).

Given a generalized function (functional) $\Phi$ and a spectral measure $\pi$ on an appropriate measurable space $(\mathfrak{X},\mathscr{F})$,
one natural question arises: how can one define an integral $\int_{\mathfrak{X}}\Phi d\pi$ both reasonably and rigorously?
Such integrals are of physical significance \cite{accardi}.
However, the usual idea of Lebesgue integration does not work in this case,
which suggests that a new method is needed to define such an integral.
Accardi \textit{et al} \cite{accardi} considered such a question in the context of Hilbert space theory.
In 2005, by using Hida's white noise analysis \cite{hida}, Wang \textit{et al} \cite{wang-huang} introduced integrals
of Schwartz generalized functions with respect to a spectral measure on the Borel space $(\mathbb{R},\mathscr{B}(\mathbb{R}))$,
which were continuous linear operators from the space of Hida testing functionals to the space of Hida generalized functionals.
In particular, they gave a rigorous definition to the delta function $\delta(Q)$ of an observable $Q$ used in the physical literature.
It is known \cite{hida} that Hida's white noise analysis is essentially a theory of infinite dimensional stochastic analysis on
functionals of Brownian motion (also known as Gaussian white noise functionals).

Bernoulli functionals (also known as Rademacher functionals) are measurable functions defined on the Bernoulli space,
and can be viewed as functionals of Bernoulli process.
Much attention has been paid to Bernoulli functionals in the past fifteen years (see, e.g.\cite{krok,nourdin,privault,zheng} and references therein).
In 2014, by using the canonical orthonormal basis for the space of square integrable Bernoulli functionals,
Wang and Zhang \cite{wang-zhang} constructed a Gel'fand triple over the Bernoulli space,
which actually introduced Bernoulli generalized functionals.
The following year, Wang and Chen \cite{wang-chen-1} obtained a characterization theorem
for generalized functionals of discrete-time normal martingale via the Fock transform,
which covers the case of Bernoulli generalized functionals.
Recently, they have further considered operators acting on generalized functionals of discrete-time normal martingale \cite{wang-chen-2}.

Let $\mathcal{S}\subset \mathcal{L}^2 \subset \mathcal{S}^*$ be the Gel'fand triple over the Bernoulli space (see Section~\ref{sec-2} for details),
where elements of $\mathcal{S}^*$ are called Bernoulli generalized functionals.
In this paper, motivated by the work of \cite{wang-huang,wang-chen-2}, we would like to define integrals of Bernoulli generalized functionals
(namely elements of $S^*$) with respect to a spectral measure in the framework of $\mathcal{S}\subset \mathcal{L}^2 \subset \mathcal{S}^*$,
and examine their fundamental properties.

The paper is organized as follows. Section~\ref{sec-2} describes the Gel'fand triple $\mathcal{S}\subset \mathcal{L}^2 \subset \mathcal{S}^*$
over the Bernoulli space, which is the framework where we work.
In Section~\ref{sec-3}, we prove a technical theorem on the regularity of continuous linear operators from the space $\mathcal{S}$ of Bernoulli testing functionals to the space $\mathcal{S}^*$ of Bernoulli generalized functionals.
In Section~\ref{sec-4}, we first introduce a notion of $\mathcal{S}$-smooth spectral measures on the Bernoulli space, where $\mathcal{S}$ refers to
the space of Bernoulli testing functionals, and then, with the 2D-Fock transform as the main tool, we define
integrals of Bernoulli generalized functionals (namely elements of $\mathcal{S}^*$) with respect to an $\mathcal{S}$-smooth spectral measure,
which are actually continuous linear operators from $\mathcal{S}$ to $\mathcal{S}^*$.
We examine fundamental properties of these integrals, and establish a convergence theorem for sequences of these integrals.
Finally in Section~\ref{sec-5}, we show an example of an $\mathcal{S}$-smooth spectral measure and Bernoulli generalized functionals that
are integrable with this spectral measure. Some further results are also obtained therein.
\vskip 2mm

\textbf{Notation and conventions.} Throughout, $\mathbb{N}$ always denotes the set of all non-negative integers.
Unless otherwise stated, letters $j$, $k$ and $n$ stand for nonnegative integers.
We denote by $\Gamma$ the finite power set of $\mathbb{N}$, namely
$\Gamma=\{\sigma \mid \sigma\subset \mathbb{N}, \# \sigma<\infty\}$
where $\# \sigma$ means the cardinality of $\sigma$ as a set.

\section{Gel'fand triple over Bernoulli space}\label{sec-2}

This section describes the Gel'fand triple over the Bernoulli space, which is the framework where we work.

Recall that $\mathbb{N}$ denotes the set of all nonnegative integers.
Let $\Sigma=\{-1,1\}^{\mathbb{N}}$ be the set of all mappings $\omega\colon \mathbb{N} \rightarrow \{-1,1\}$, and
$(\zeta_n)_{n\geq 0}$ the sequence of canonical projections on $\Sigma$ given by
\begin{equation}\label{eq-2-1}
    \zeta_n(\omega)=\omega(n),\quad \omega\in \Sigma.
\end{equation}
Denote by $\mathscr{A}$ the $\sigma$-field on $\Sigma$ generated by the sequence $(\zeta_n)_{n\geq 0}$.
Let $(\theta_n)_{n\geq 0}$ be a given sequence of positive numbers with the property that $0 < \theta_n < 1$ for all $n\geq 0$.
It is known \cite{privault} that there exists a unique probability measure $\mu$ on $\mathscr{A}$ such that
\begin{equation}\label{eq-2-2}
\mu\circ(\zeta_{n_1}, \zeta_{n_2}, \cdots, \zeta_{n_k})^{-1}\big\{(\epsilon_1, \epsilon_2, \cdots, \epsilon_k)\big\}
=\prod_{j=1}^k \theta_{n_j}^{\frac{1+\epsilon_j}{2}}(1-\theta_{n_j})^{\frac{1-\epsilon_j}{2}}
\end{equation}
for $n_j\in \mathbb{N}$, $\epsilon_j\in \{-1,1\}$ ($1\leq j \leq k$) with $n_i\neq n_j$ when $i\neq j$
and $k\in \mathbb{N}$ with $k\geq 1$. Then we come to a probability measure space $(\Sigma, \mathscr{A}, \mu)$,
which is referred to as the \textbf{Bernoulli space}. Measurable functions (complex-valued random variables)
on $(\Sigma, \mathscr{A}, \mu)$ are usually known as \textbf{Bernoulli functionals}.

Let $(Z_n)_{n\geq 0}$ be the sequence of independent random variables on $(\Sigma, \mathscr{A}, \mu)$ defined by
\begin{equation}\label{eq-2-3}
   Z_n = \frac{\zeta_n + 1 - 2\theta_n}{2\sqrt{\theta_n(1-\theta_n)}},\quad n\geq0.
\end{equation}
Clearly, for each $n\geq 0$, $Z_n$ has a probability distribution of the following form
\begin{equation}\label{eq-2-4}
  \mu\left\{Z_n = \sqrt{(1-\theta_n)/\theta_n}\right\}=\theta_n,\quad
    \mu\left\{Z_n = -\sqrt{\theta_n/(1-\theta_n)}\right\}=1-\theta_n.
\end{equation}
Let $\mathcal{L}^2\equiv \mathcal{L}^2(\Sigma, \mathscr{A}, \mu)$ be the space of square integrable Bernoulli functionals.
We denote by $\langle\cdot,\cdot\rangle$ the usual inner product in space $\mathcal{L}^2$
given by
\begin{equation}\label{eq-2-5}
  \langle\xi,\eta\rangle=\int_{\Sigma}\overline{\xi}\eta d\mu,\quad \xi,\, \eta \in \mathcal{L}^2,
\end{equation}
and by $\|\cdot\|$ the corresponding norm.
It is known that $(Z_n)_{n\geq 0}$ has the chaotic representation property \cite{privault}.
Thus $\mathcal{L}^2$ has an orthonormal basis of the form $\{Z_{\sigma}\mid \sigma \in \Gamma\}$,
where $Z_{\emptyset}=1$ and
\begin{equation}\label{eq-2-6}
    Z_{\sigma} = \prod_{i\in \sigma}Z_i,\quad \text{$\sigma \in \Gamma$, $\sigma \neq \emptyset$},
\end{equation}
where $\Gamma$ is the finite power set of $\mathbb{N}$ (see Section~\ref{sec-1} for the definition of $\Gamma$).
Clearly, as a complex Hilbert space, $\mathcal{L}^2$ is infinite-dimensional and separable.
In what follows, we call $\{Z_{\sigma}\mid \sigma \in \Gamma\}$ the \textbf{canonical orthonormal basis} for $\mathcal{L}^2$.

\begin{lemma}\label{lem-2-1}\cite{wang-chen-1}
Let $\sigma \mapsto \lambda_{\sigma}$ be the positive integer-valued function on $\Gamma$ given by
\begin{equation}\label{eq-2-7}
  \lambda_{\sigma}
     =\left\{
     \begin{array}{ll}
       1, & \hbox{$\sigma=\emptyset$, $\sigma\in \Gamma$;} \\
       \Pi_{k\in \sigma}(1+k), & \hbox{$\sigma\neq\emptyset$, $\sigma\in \Gamma$.}
     \end{array}
   \right.
\end{equation}
Then the series $\sum_{\sigma\in \Gamma}\lambda_{\sigma}^{-r}$ converges for all real number $r>1$, and moreover,
it holds true that
\begin{equation}\label{eq-2-8}
  \sum_{\sigma\in \Gamma}\lambda_{\sigma}^{-r} \leq \exp\Big[\sum_{n=1}^{\infty}n^{-r}\Big].
\end{equation}
\end{lemma}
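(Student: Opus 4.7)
The plan is to exploit the multiplicative structure of $\lambda_\sigma$ to turn the sum over $\Gamma$ into an infinite product, and then compare that product with an exponential via the elementary inequality $1+x\le e^x$ for $x\ge 0$.

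First I would note that for any nonempty $\sigma\in\Gamma$, since $\lambda_\sigma=\prod_{k\in\sigma}(1+k)$, we have $\lambda_\sigma^{-r}=\prod_{k\in\sigma}(1+k)^{-r}$, while $\lambda_\emptyset^{-r}=1$. Restricting attention first to subsets of $\{0,1,\ldots,N\}$, the distributive law gives the identity
\begin{equation*}
\sum_{\sigma\subset\{0,1,\ldots,N\}} \lambda_\sigma^{-r}
=\prod_{k=0}^{N}\bigl(1+(1+k)^{-r}\bigr)
=\prod_{n=1}^{N+1}\bigl(1+n^{-r}\bigr),
\end{equation*}
where the empty-set term accounts for the ``$1$'' in each factor. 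Since every $\sigma\in\Gamma$ is finite, it lies in some such $\{0,1,\ldots,N\}$, and all summands are nonnegative, so by monotone convergence
\begin{equation*}
\sum_{\sigma\in\Gamma}\lambda_\sigma^{-r}
=\lim_{N\to\infty}\prod_{n=1}^{N+1}\bigl(1+n^{-r}\bigr)
=\prod_{n=1}^{\infty}\bigl(1+n^{-r}\bigr).
\end{equation*}

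Next I would apply $1+x\le e^x$ for $x\ge 0$ to each factor, giving
\begin{equation*}
\prod_{n=1}^{\infty}\bigl(1+n^{-r}\bigr)
\le \prod_{n=1}^{\infty}\exp\bigl(n^{-r}\bigr)
=\exp\!\Bigl[\sum_{n=1}^{\infty}n^{-r}\Bigr].
\end{equation*}
For $r>1$ the $p$-series $\sum_{n=1}^\infty n^{-r}$ converges, so the right-hand side is finite, which simultaneously establishes the convergence claim and the stated bound \eqref{eq-2-8}.

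There is no real obstacle here; the only subtlety is recognizing that $\Gamma$ is the union (over $N$) of the power sets of the finite initial segments $\{0,1,\ldots,N\}$, so that the distributive expansion is legitimate and monotone convergence applies. Once that observation is in place, the rest is routine.
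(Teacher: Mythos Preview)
Your proof is correct. Note, however, that the paper does not actually supply its own proof of this lemma: it is stated as a citation to \cite{wang-chen-1} and left unproved in the present manuscript. Your argument---factoring the sum over $\Gamma$ as the infinite product $\prod_{n\ge 1}(1+n^{-r})$ via the distributive law over finite initial segments, then bounding each factor by $e^{n^{-r}}$---is the standard and natural route, and is almost certainly what the cited reference does as well.
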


Using the function $\sigma\mapsto \lambda_{\sigma}$ introduced above, we can construct a chain of Hilbert spaces
consisting of Bernoulli functionals as follows.

For $\sigma\in \Gamma$, we use $|Z_{\sigma}\rangle\!\langle Z_{\sigma}|$ to mean the Dirac operator associated with
the basis vector $Z_{\sigma}$, which is a $1$-dimensional projection operator on $\mathcal{L}^2$.
Then the countable family $\{\,|Z_{\sigma}\rangle\!\langle Z_{\sigma}|\,\}_{\sigma\in \Gamma}$ forms
a resolution of identity on $\mathcal{L}^2$.
For $p \geq 0$, let $\mathcal{S}_p$ be the domain of the operator
$A_p =  \sum_{\sigma\in \Gamma}\lambda_{\sigma}^p|Z_{\sigma}\rangle\!\langle Z_{\sigma}|$, namely
\begin{equation}\label{eq-2-9}
  \mathcal{S}_p = \mathrm{Dom}\, A_p
=\Big\{\xi\in \mathcal{L}^2 \Bigm| \sum_{\sigma\in \Gamma}\lambda_{\sigma}^{2p}|\langle Z_{\sigma}, \xi\rangle|^2<\infty\Big\}.
\end{equation}
It is easy to verify that $\mathcal{S}_p$ becomes a Hilbert space with the inner product $\langle\cdot,\cdot\rangle_p$ given by
\begin{equation}\label{eq-2-10}
  \langle\xi,\eta\rangle_p = \langle A_p\xi,  A_p\eta\rangle
  = \sum_{\sigma\in \Gamma}\lambda_{\sigma}^{2p}\overline{\langle Z_{\sigma}, \xi\rangle}\langle Z_{\sigma},\eta\rangle,\quad \xi,\, \eta\in\mathcal{S}_p.
\end{equation}
We denote by $\|\cdot\|_p$ the norm induced by $\langle\cdot,\cdot\rangle_p$, which obviously satisfies the following relations
\begin{equation}\label{eq-2-11}
  \|\xi\|_p^2 = \|A_p\xi\|^2 = \sum_{\sigma\in \Gamma}\lambda_{\sigma}^{2p}|\langle Z_{\sigma}, \xi\rangle|^2,\quad \xi\in\mathcal{S}_p.
\end{equation}

\begin{lemma}\cite{wang-chen-1} \label{lem-2-2}
Let $p \geq  0$ be given. Then $\{Z_{\sigma} | \sigma \in \Gamma\} \subset \mathcal{S}_p$ and, moreover, the system
$\{\lambda_{\sigma}^{-p}Z_{\sigma}\mid \sigma\in \Gamma\} $ forms an orthonormal basis for $\mathcal{S}_p$.
\end{lemma}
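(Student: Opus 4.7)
The plan is to verify the two claims by direct computation using the resolution of identity $\{|Z_\sigma\rangle\!\langle Z_\sigma|\}_{\sigma\in\Gamma}$ on $\mathcal{L}^2$, and then deduce completeness from the fact that $A_p$ acts as an isometric isomorphism between $\mathcal{S}_p$ and $\mathcal{L}^2$.

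First I would show $Z_\tau \in \mathcal{S}_p$ for every $\tau \in \Gamma$. Since $\{Z_\sigma\mid \sigma\in\Gamma\}$ is an orthonormal basis of $\mathcal{L}^2$, we have $\langle Z_\sigma, Z_\tau\rangle = \delta_{\sigma\tau}$, so the defining sum in (\ref{eq-2-9}) collapses to $\sum_{\sigma\in\Gamma}\lambda_\sigma^{2p}|\langle Z_\sigma, Z_\tau\rangle|^2 = \lambda_\tau^{2p} < \infty$. This also yields the norm identity $\|Z_\tau\|_p = \lambda_\tau^p$.

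Next, I would verify orthonormality of $\{\lambda_\sigma^{-p}Z_\sigma\}$ in $\mathcal{S}_p$. Applying (\ref{eq-2-10}) and $\langle Z_\rho, Z_\sigma\rangle = \delta_{\rho\sigma}$ gives
\begin{equation*}
\langle Z_\sigma, Z_\tau\rangle_p = \sum_{\rho\in\Gamma}\lambda_\rho^{2p}\overline{\langle Z_\rho, Z_\sigma\rangle}\langle Z_\rho, Z_\tau\rangle = \lambda_\sigma^{2p}\delta_{\sigma\tau},
\end{equation*}
so $\langle \lambda_\sigma^{-p}Z_\sigma, \lambda_\tau^{-p}Z_\tau\rangle_p = \delta_{\sigma\tau}$.

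For totality, the cleanest route is to observe that $A_p\colon \mathcal{S}_p \to \mathcal{L}^2$ is an isometric bijection: by (\ref{eq-2-11}) it is isometric, and its inverse is the bounded operator $\sum_{\sigma}\lambda_\sigma^{-p}|Z_\sigma\rangle\!\langle Z_\sigma|$ defined on all of $\mathcal{L}^2$. Since $A_p Z_\sigma = \lambda_\sigma^p Z_\sigma$, we have $A_p(\lambda_\sigma^{-p}Z_\sigma) = Z_\sigma$; as $\{Z_\sigma\}_{\sigma\in\Gamma}$ is an orthonormal basis of $\mathcal{L}^2$, its preimage $\{\lambda_\sigma^{-p}Z_\sigma\}_{\sigma\in\Gamma}$ is an orthonormal basis of $\mathcal{S}_p$. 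Alternatively, one can argue directly: if $\xi\in\mathcal{S}_p$ satisfies $\langle \lambda_\sigma^{-p}Z_\sigma, \xi\rangle_p = 0$ for every $\sigma$, then expanding as above gives $\lambda_\sigma^p\langle Z_\sigma, \xi\rangle = 0$, hence $\langle Z_\sigma, \xi\rangle = 0$ for all $\sigma$, forcing $\xi = 0$.

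I do not anticipate any genuine obstacle; the only mild point requiring care is justifying the isometric bijection claim for $A_p$ (or equivalently that the orthonormal sequence is complete, not merely total), but this is immediate from the spectral decomposition built into the definition of $A_p$.
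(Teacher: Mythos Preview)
Your argument is correct. Note that the paper does not actually prove this lemma: it is quoted from \cite{wang-chen-1} and stated without proof, so there is no ``paper's own proof'' to compare against. Your direct verification---computing $\langle Z_\sigma, Z_\tau\rangle_p = \lambda_\sigma^{2p}\delta_{\sigma\tau}$ from the orthonormality of $\{Z_\sigma\}$ in $\mathcal{L}^2$, and then establishing completeness either via the isometric isomorphism $A_p\colon \mathcal{S}_p\to\mathcal{L}^2$ or by the orthogonality criterion---is exactly the standard route and is sound. The only remark is that your two completeness arguments are essentially the same: the ``direct'' one is just the statement that $A_p$ has trivial kernel combined with the fact that $\{Z_\sigma\}$ is total in $\mathcal{L}^2$, which is the isometry half of the first argument.
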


It is not hard to show that the norms $\{\|\cdot\|_p \mid p\geq 0\}$ are compatible.
This, together with the fact $\lambda_{\sigma}\geq 1$ for all $\sigma\in \Gamma$,
implies that $\|\cdot\|_p\leq \|\cdot\|_q$
and $\mathcal{S}_q\subset \mathcal{S}_p$  whenever $0 \leq p \leq q$. Consequently, we get a chain of Hilbert spaces of Bernoulli functionals
as follows:
\begin{equation}\label{eq-2-12}
 \cdots \subset \mathcal{S}_{p+1} \subset \mathcal{S}_p \subset \cdots \subset \mathcal{S}_0 =\mathcal{L}^2.
\end{equation}
We put
\begin{equation}\label{eq-2-13}
  \mathcal{S} = \bigcap_{p=0}^{\infty}\mathcal{S}_p
\end{equation}
and endow it with the topology generated by the norm sequence $\big(\|\cdot\|_p\big)_{p \geq 0}$.
Note that, for each $p \geq 0$, $\mathcal{S}_p$ is just the completion of $\mathcal{S}$ with respect to norm $\|\cdot\|_p$.
Thus $\mathcal{S}$ is a countably-Hilbert space. The next lemma, however, shows that $\mathcal{S}$ even has a much
better property.

\begin{lemma}\cite{wang-chen-1} \label{lem-2-3}
The space $\mathcal{S}$ is a nuclear space, namely for any $p\geq 0$, there exists $q > p$ such that the inclusion mapping
$i_{pq} \colon \mathcal{S}_q \rightarrow \mathcal{S}_p$ defined by $i_{pq}(\xi) = \xi$ is a Hilbert-Schmidt operator.
\end{lemma}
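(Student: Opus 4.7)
The plan is to verify nuclearity by producing, for each $p\geq 0$, an explicit $q>p$ such that the inclusion $i_{pq}\colon \mathcal{S}_q\to\mathcal{S}_p$ has finite Hilbert–Schmidt norm. The only tools required are the orthonormal basis description of $\mathcal{S}_q$ supplied by Lemma~\ref{lem-2-2} and the convergence estimate of Lemma~\ref{lem-2-1}; no new machinery is needed.

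First I would fix an arbitrary $p\geq 0$ and work with the orthonormal basis $\{\lambda_{\sigma}^{-q}Z_{\sigma}\mid \sigma\in\Gamma\}$ of $\mathcal{S}_q$ guaranteed by Lemma~\ref{lem-2-2}. Next I would evaluate the $\|\cdot\|_p$-norm of each basis vector. Using the defining formula \eqref{eq-2-11} together with the fact that $\{\lambda_{\tau}^{-p}Z_{\tau}\mid \tau\in\Gamma\}$ is orthonormal in $\mathcal{L}^2$ (hence $\langle Z_{\tau},Z_{\sigma}\rangle = \delta_{\tau\sigma}$), one obtains
\begin{equation*}
\|Z_{\sigma}\|_p^2 = \sum_{\tau\in\Gamma}\lambda_{\tau}^{2p}|\langle Z_{\tau},Z_{\sigma}\rangle|^2 = \lambda_{\sigma}^{2p},
\end{equation*}
so that $\|i_{pq}(\lambda_{\sigma}^{-q}Z_{\sigma})\|_p^2 = \lambda_{\sigma}^{-2q}\|Z_{\sigma}\|_p^2 = \lambda_{\sigma}^{-2(q-p)}$.

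Summing over the orthonormal basis gives the Hilbert–Schmidt norm
\begin{equation*}
\|i_{pq}\|_{\mathrm{HS}}^2 = \sum_{\sigma\in\Gamma} \lambda_{\sigma}^{-2(q-p)}.
\end{equation*}
By Lemma~\ref{lem-2-1}, this series converges whenever $2(q-p)>1$. Choosing, for instance, $q=p+1$ (or any $q>p+\tfrac{1}{2}$) yields a Hilbert–Schmidt inclusion. Since $p\geq 0$ was arbitrary, $\mathcal{S}$ is nuclear.

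There is no real obstacle here; the proof is essentially a bookkeeping computation once Lemmas~\ref{lem-2-1} and~\ref{lem-2-2} are in hand. The only point requiring any care is correctly identifying $\|Z_{\sigma}\|_p=\lambda_{\sigma}^p$ from the norm formula \eqref{eq-2-11}, after which the Hilbert–Schmidt computation reduces directly to the convergence estimate \eqref{eq-2-8}.
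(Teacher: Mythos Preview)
Your argument is correct and is exactly the standard one. Note, however, that the paper does not supply its own proof of this lemma; it is quoted from \cite{wang-chen-1} without argument, so there is nothing to compare against beyond observing that your computation is the natural way to verify the claim using Lemmas~\ref{lem-2-1} and~\ref{lem-2-2}.

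One small wording slip: you write that $\{\lambda_{\tau}^{-p}Z_{\tau}\}$ is orthonormal in $\mathcal{L}^2$, but that system is orthonormal in $\mathcal{S}_p$; what is orthonormal in $\mathcal{L}^2$ is the canonical basis $\{Z_{\tau}\}$ itself. Your conclusion $\langle Z_{\tau},Z_{\sigma}\rangle=\delta_{\tau\sigma}$ is of course correct for that reason, so the computation is unaffected.
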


For $p \geq 0$, we denote by $\mathcal{S}_p^*$ the dual of $\mathcal{S}_p$ and $\|\cdot\|_{-p}$ the norm of $\mathcal{S}_p^*$.
Then $\mathcal{S}_p^*\subset \mathcal{S}_q^*$ and  $\|\cdot\|_{-p}\geq \|\cdot\|_{-q}$ whenever $0 \leq p \leq q$.
The lemma below is then an immediate consequence of the general theory of countably-Hilbert spaces (see \cite{gel-1,becnel}).

\begin{lemma}\label{lem-2-4}
Let $\mathcal{S}^*$ be the dual of $\mathcal{S}$ and endow it with the strong topology. Then
\begin{equation}\label{eq-2-14}
  \mathcal{S}^* = \bigcup_{p=0}^{\infty}\mathcal{S}^*_p
\end{equation}
and, moreover, the inductive limit topology on $\mathcal{S}^*$ given by space sequence $\{\mathcal{S}^*_p\}_{p\geq 0}$ coincides
with the strong topology.
\end{lemma}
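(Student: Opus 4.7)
My plan is to obtain both assertions from the abstract theory of countably-Hilbert nuclear spaces (as in Gel'fand-Vilenkin or Becnel), specialized to the concrete chain (2.12). The two nontrivial facts I would use from that theory are: (i) a continuous linear functional on a countably-Hilbert space is automatically bounded with respect to one of the defining Hilbert norms, and (ii) when the space is nuclear, the strong dual topology agrees with the inductive limit topology built from the dual Hilbert spaces.

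First I would establish the set-theoretic identity (\ref{eq-2-14}). The inclusion $\bigcup_{p\geq 0}\mathcal{S}_p^*\subset \mathcal{S}^*$ is easy: since $\|\cdot\|_p$ is one of the seminorms defining the topology on $\mathcal{S}$, each inclusion $\mathcal{S}\hookrightarrow \mathcal{S}_p$ is continuous, and hence restriction sends $\mathcal{S}_p^*$ into $\mathcal{S}^*$. Conversely, fix $\Phi\in \mathcal{S}^*$. Continuity of $\Phi$ at the origin in the countably-Hilbert topology yields some $p\geq 0$ and $C>0$ with $|\Phi(\xi)|\leq C\|\xi\|_p$ for all $\xi\in \mathcal{S}$. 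By Lemma~\ref{lem-2-2}, $\mathcal{S}$ contains the basis $\{Z_\sigma\}_{\sigma\in \Gamma}$ whose renormalization is an orthonormal basis of $\mathcal{S}_p$, so $\mathcal{S}$ is dense in $\mathcal{S}_p$. The estimate allows $\Phi$ to be extended uniquely by continuity to an element of $\mathcal{S}_p^*$, giving the reverse inclusion.

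Next I would show that the two topologies on $\mathcal{S}^*$ coincide. That the inductive limit topology dominates the strong topology is formal: the natural maps $\mathcal{S}_p^*\hookrightarrow \mathcal{S}^*$ are continuous for the strong topology because bounded subsets of $\mathcal{S}$ are bounded in each $\mathcal{S}_p$, so the strong topology pulled back to $\mathcal{S}_p^*$ is coarser than the norm topology $\|\cdot\|_{-p}$, whence strongly open sets are inductive-limit open. For the reverse direction I would invoke nuclearity (Lemma~\ref{lem-2-3}): the Hilbert-Schmidt connecting maps $i_{pq}$ guarantee that every bounded set $B\subset \mathcal{S}$ is, in fact, precompact (hence bounded) in some single $\mathcal{S}_q$. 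This is the classical consequence of nuclearity that allows one to identify equicontinuous (equivalently, strongly bounded) subsets of $\mathcal{S}^*$ with subsets that are norm-bounded in a single $\mathcal{S}_q^*$, yielding via polarity that every strongly open neighborhood of zero is inductive-limit open.

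The main obstacle is the topology-coincidence part, since for a general (non-nuclear) countably-Hilbert space the strong and inductive limit topologies on the dual need not agree; the argument only goes through because Lemma~\ref{lem-2-3} supplies Hilbert-Schmidt embeddings in the chain (\ref{eq-2-12}). Once one recognizes that the abstract hypotheses are exactly those secured by Lemmas~\ref{lem-2-2} and~\ref{lem-2-3}, the statement becomes a direct application of the standard theorem on duals of nuclear countably-Hilbert spaces, which is why the author describes it as an immediate consequence of the general theory.
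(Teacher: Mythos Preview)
Your proposal is correct and matches the paper's approach: the paper gives no proof at all, simply stating that the lemma ``is then an immediate consequence of the general theory of countably-Hilbert spaces'' with citations to Gel'fand--Shilov and Becnel. You have supplied a sketch of exactly that general argument, correctly identifying that the set equality follows from continuity at the origin plus density, and that the topology coincidence relies on the nuclearity established in Lemma~\ref{lem-2-3}.
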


By identifying $\mathcal{L}^2$ with its dual, one naturally comes to a Gel'fand triple of the following form
\begin{equation}\label{eq-2-15}
\mathcal{S}\subset \mathcal{L}^2 \subset \mathcal{S}^*,
\end{equation}
which is referred to as the \textbf{Gel'fand triple over the Bernoulli space} $(\Sigma,\mathscr{A},\mu)$.
By convention, elements of $\mathcal{S}^*$ are called \textbf{Bernoulli generalized functionals},
while elements of $\mathcal{S}$ are called \textbf{Bernoulli testing functionals}.

\begin{lemma}\label{lem-2-5}\cite{wang-chen-1}
The system $\{Z_{\sigma} \mid \sigma \in \Gamma\}$  is contained in $\mathcal{S}$ and, moreover, it forms a
basis for $\mathcal{S}$ in the sense that
\begin{equation}\label{eq-2-16}
  \xi = \sum_{\sigma\in \Gamma}\langle Z_{\sigma}, \xi\rangle Z_{\sigma},\quad \xi\in \mathcal{S},
\end{equation}
where $\langle\cdot,\cdot\rangle$ is the inner product of $\mathcal{L}^2$ and the series converges in the topology of $\mathcal{S}$.
\end{lemma}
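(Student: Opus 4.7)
The plan is to use Lemma~\ref{lem-2-2} together with the definition of the topology on $\mathcal{S}$ by the norm sequence $(\|\cdot\|_p)_{p\geq 0}$. The argument amounts to a Fourier series computation, with the point being that membership of $\xi$ in every $\mathcal{S}_p$ forces the $\mathcal{L}^2$-expansion of $\xi$ to converge in every norm $\|\cdot\|_p$.

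First I would verify $Z_\sigma \in \mathcal{S}$ for every $\sigma\in\Gamma$. By Lemma~\ref{lem-2-2}, the vector $\lambda_\sigma^{-p}Z_\sigma$ belongs to the orthonormal basis of $\mathcal{S}_p$, so $Z_\sigma \in \mathcal{S}_p$ with $\|Z_\sigma\|_p = \lambda_\sigma^p < \infty$ for every $p\geq 0$. Hence $Z_\sigma\in \bigcap_{p\geq 0}\mathcal{S}_p = \mathcal{S}$.

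Next, fix $\xi\in \mathcal{S}$. Since $\xi\in \mathcal{L}^2$ and $\{Z_\sigma\mid \sigma\in \Gamma\}$ is an orthonormal basis of $\mathcal{L}^2$, the series $\sum_{\sigma\in \Gamma}\langle Z_\sigma,\xi\rangle Z_\sigma$ converges to $\xi$ in $\mathcal{L}^2$; in particular, the Fourier coefficients $\langle Z_\sigma,\xi\rangle$ in \eqref{eq-2-16} are the correct ones. To upgrade this to convergence in the topology of $\mathcal{S}$, fix $p\geq 0$ and any finite subset $F\subset \Gamma$. Using \eqref{eq-2-11} and orthogonality of the $Z_\sigma$ in $\mathcal{S}_p$, a direct calculation gives
\begin{equation*}
\Bigl\|\xi - \sum_{\sigma\in F}\langle Z_\sigma,\xi\rangle Z_\sigma\Bigr\|_p^2
= \sum_{\sigma\in \Gamma\setminus F}\lambda_\sigma^{2p}|\langle Z_\sigma,\xi\rangle|^2.
\end{equation*}
Because $\xi\in \mathcal{S}_p$, the full series $\sum_{\sigma\in\Gamma}\lambda_\sigma^{2p}|\langle Z_\sigma,\xi\rangle|^2 = \|\xi\|_p^2$ is finite by \eqref{eq-2-11}, so its tail over $\Gamma\setminus F$ can be made arbitrarily small by choosing $F$ large enough. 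Hence the partial sums converge to $\xi$ in $\|\cdot\|_p$.

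Since this holds for every $p\geq 0$, the series converges to $\xi$ in the topology of $\mathcal{S}$, which is generated by $(\|\cdot\|_p)_{p\geq 0}$. There is no real obstacle here: the only thing to watch is the distinction between $\mathcal{L}^2$-convergence and convergence in each $\mathcal{S}_p$, and this is handled uniformly by the simple identity above once one knows $\xi\in \mathcal{S}_p$ for every $p$.
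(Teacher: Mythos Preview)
The paper does not give a proof of this lemma; it is quoted from \cite{wang-chen-1}. Your argument is correct and is exactly the standard one: Lemma~\ref{lem-2-2} gives $Z_\sigma\in\mathcal{S}_p$ for every $p$, and the tail estimate from \eqref{eq-2-11} shows the $\mathcal{L}^2$-expansion of $\xi$ converges in every $\|\cdot\|_p$, hence in the projective topology of $\mathcal{S}$.
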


We denote by $\langle\!\langle\cdot,\cdot\rangle\!\rangle$ the canonical bilinear form (also known as pairing) on $\mathcal{S}^*\times \mathcal{S}$, namely
\begin{equation}\label{eq-2-17}
  \langle\!\langle\Phi,\xi\rangle\!\rangle = \Phi(\xi),\quad \Phi\in \mathcal{S}^*,\, \xi\in \mathcal{S},
\end{equation}
where $\Phi(\xi)$ means the value of the functional $\Phi$ at $\xi$. Note that $\langle\cdot,\cdot\rangle$ denotes the inner product of $\mathcal{L}^2$,
which is different from $\langle\!\langle\cdot,\cdot\rangle\!\rangle$.

\begin{lemma}\cite{wang-lin}\label{lem-2-6}
Let $\Phi\in \mathcal{S}^*$ be given. Then, for $p\geq 0$, $\Phi\in \mathcal{S}^*_p$ if and only if $\Phi$ satisfies that
\begin{equation}\label{eq-2-18}
  \sum_{\sigma\in \Gamma}\lambda_{\sigma}^{-2p}|\langle\!\langle \Phi, Z_{\sigma}\rangle\!\rangle|^2<\infty.
\end{equation}
In that case $\|\Phi\|_{-p}^2 = \sum_{\sigma\in \Gamma}\lambda_{\sigma}^{-2p}|\langle\!\langle \Phi, Z_{\sigma}\rangle\!\rangle|^2$.
\end{lemma}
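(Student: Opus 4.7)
The plan is to reduce the statement to a standard weighted $\ell^2$ duality by using the two orthonormal expansions already available: the basis $\{\lambda_\sigma^{-p}Z_\sigma\}_{\sigma\in\Gamma}$ of $\mathcal{S}_p$ from Lemma~\ref{lem-2-2}, and the $\mathcal{S}$-convergent expansion $\xi=\sum_\sigma\langle Z_\sigma,\xi\rangle Z_\sigma$ from Lemma~\ref{lem-2-5}. Since $\mathcal{S}$ is dense in each $\mathcal{S}_p$ (being its pre-completion in the norm $\|\cdot\|_p$), a functional $\Phi\in\mathcal{S}^*$ belongs to $\mathcal{S}_p^*$ exactly when its restriction to $\mathcal{S}$ is $\|\cdot\|_p$-bounded, and the dual norm then equals the operator norm of that restriction. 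This reformulation is what turns the problem into a routine Cauchy--Schwarz calculation.

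For the \emph{if} direction, set $M=\sum_{\sigma\in\Gamma}\lambda_\sigma^{-2p}|\langle\!\langle\Phi,Z_\sigma\rangle\!\rangle|^2$ and assume $M<\infty$. For any $\xi\in\mathcal{S}$, Lemma~\ref{lem-2-5} gives $\xi=\sum_\sigma\langle Z_\sigma,\xi\rangle Z_\sigma$ in the topology of $\mathcal{S}$, so continuity of $\Phi$ yields
\[
\langle\!\langle\Phi,\xi\rangle\!\rangle=\sum_{\sigma\in\Gamma}\langle Z_\sigma,\xi\rangle\,\langle\!\langle\Phi,Z_\sigma\rangle\!\rangle.
\]
Writing $\langle Z_\sigma,\xi\rangle\,\langle\!\langle\Phi,Z_\sigma\rangle\!\rangle=\bigl(\lambda_\sigma^p\langle Z_\sigma,\xi\rangle\bigr)\bigl(\lambda_\sigma^{-p}\langle\!\langle\Phi,Z_\sigma\rangle\!\rangle\bigr)$ and applying the Cauchy--Schwarz inequality, together with formula~(\ref{eq-2-11}), gives $|\langle\!\langle\Phi,\xi\rangle\!\rangle|^2\leq\|\xi\|_p^2\cdot M$. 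Hence $\Phi$ extends by density to an element of $\mathcal{S}_p^*$ with $\|\Phi\|_{-p}^2\leq M$.

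For the \emph{only if} direction, assume $\Phi\in\mathcal{S}_p^*$ and probe it, for each finite subset $\Gamma_0\subset\Gamma$, with the test element
\[
\xi_{\Gamma_0}=\sum_{\sigma\in\Gamma_0}\overline{\langle\!\langle\Phi,Z_\sigma\rangle\!\rangle}\,\lambda_\sigma^{-2p}\,Z_\sigma\in\mathcal{S}.
\]
A direct calculation from~(\ref{eq-2-11}) and the orthonormality of $\{Z_\sigma\}$ in $\mathcal{L}^2$ gives
\[
\|\xi_{\Gamma_0}\|_p^2=\sum_{\sigma\in\Gamma_0}\lambda_\sigma^{-2p}|\langle\!\langle\Phi,Z_\sigma\rangle\!\rangle|^2=\langle\!\langle\Phi,\xi_{\Gamma_0}\rangle\!\rangle,
\]
so the dual bound $|\langle\!\langle\Phi,\xi_{\Gamma_0}\rangle\!\rangle|\leq\|\Phi\|_{-p}\,\|\xi_{\Gamma_0}\|_p$ forces $\|\xi_{\Gamma_0}\|_p\leq\|\Phi\|_{-p}$ and hence $\sum_{\sigma\in\Gamma_0}\lambda_\sigma^{-2p}|\langle\!\langle\Phi,Z_\sigma\rangle\!\rangle|^2\leq\|\Phi\|_{-p}^2$. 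Passing to the supremum over finite $\Gamma_0$ yields $M\leq\|\Phi\|_{-p}^2$, in particular $M<\infty$, and combining with the previous direction gives the claimed identity $\|\Phi\|_{-p}^2=M$. I do not anticipate any real obstacle beyond bookkeeping the conjugation convention in the pairing when assembling $\xi_{\Gamma_0}$, so that the inner product with $\Phi$ produces a nonnegative real quantity; everything else is bilinear routine.
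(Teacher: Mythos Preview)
Your argument is correct: both directions are the standard Riesz/weighted-$\ell^2$ duality computation, and the test-vector $\xi_{\Gamma_0}$ with the complex conjugate on the coefficient is exactly the right choice so that $\langle\!\langle\Phi,\xi_{\Gamma_0}\rangle\!\rangle=\|\xi_{\Gamma_0}\|_p^2$ is nonnegative real, given that $\langle\!\langle\cdot,\cdot\rangle\!\rangle$ is bilinear rather than sesquilinear. Note, however, that the paper does \emph{not} supply its own proof of this lemma; it is simply quoted from the reference \cite{wang-lin}, so there is nothing in the present paper to compare your approach against. Your proof is the natural one for this kind of statement and would be entirely acceptable as a self-contained justification.
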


\section{Technical theorem}\label{sec-3}

In this section, we establish a technical theorem about the regularity of
operators acting on Bernoulli functionals, which will be used to prove our main results.
We keep using the notions and notation fixed in previous sections.

\begin{definition}\cite{wang-chen-2}\label{def-3-1}
For an operator $\mathsf{T}\colon \mathcal{S} \rightarrow \mathcal{S}^*$, its 2D-Fock transform is the function
$\widehat{\mathsf{T}}$ on $\Gamma\times \Gamma$ given by
\begin{equation}\label{eq-3-1}
  \widehat{\mathsf{T}}(\sigma,\tau)=\langle\!\langle \mathsf{T}Z_{\sigma},Z_{\tau}\rangle\!\rangle,\quad \sigma,\, \tau \in \Gamma.
\end{equation}
\end{definition}

Continuous linear operators from $\mathcal{S}$ to $\mathcal{S}^*$ are completely determined by their 2D-Fock transforms.
More precisely, if $\mathsf{T}_1$, $\mathsf{T}_2\colon \mathcal{S}\rightarrow \mathcal{S}^*$ are continuous linear operators,
then $\mathsf{T}_1=\mathsf{T}_2$
if and only if their 2D-Fock transforms are the same, namely $\widehat{\mathsf{T}_1}=\widehat{\mathsf{T}_2}$.
The following lemma offers a useful characterization of
continuous linear operators from $\mathcal{S}$ to $\mathcal{S}^*$ via their 2D-Fock transforms.

\begin{lemma}\cite{wang-chen-2}\label{lem-3-1}
A function $G$ on $\Gamma\times \Gamma$ is the 2D-Fock transform of a continuous linear operator $\mathsf{T}\colon \mathcal{S} \rightarrow \mathcal{S}^*$
if and only if it satisfies that
\begin{equation}\label{eq-3-2}
  |G(\sigma, \tau)|\leq C\lambda_{\sigma}^p\lambda_{\tau}^p,\quad  \sigma,\, \tau \in \Gamma
\end{equation}
for some constants $C\geq 0$ and $p\geq 0$.
\end{lemma}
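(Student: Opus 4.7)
The proof naturally splits into necessity and sufficiency. For the necessity direction, suppose $\mathsf{T}\colon \mathcal{S}\to\mathcal{S}^*$ is a continuous linear operator. Since $\mathcal{S}$ is a countably Hilbert nuclear space with defining norms $\|\cdot\|_p$ and $\mathcal{S}^*=\bigcup_{p\geq 0}\mathcal{S}_p^*$ carries the inductive limit topology by Lemma~\ref{lem-2-4}, continuity of $\mathsf{T}$ should yield constants $p,q\geq 0$ and $C_0\geq 0$ with $\|\mathsf{T}\xi\|_{-q}\leq C_0\|\xi\|_p$ for every $\xi\in\mathcal{S}$. Combining this with $\|Z_\sigma\|_p=\lambda_\sigma^p$ and $\|Z_\tau\|_q=\lambda_\tau^q$, both of which follow from Lemma~\ref{lem-2-2}, I would estimate
\begin{equation*}
|G(\sigma,\tau)|=|\langle\!\langle\mathsf{T}Z_\sigma,Z_\tau\rangle\!\rangle|\leq\|\mathsf{T}Z_\sigma\|_{-q}\|Z_\tau\|_q\leq C_0\lambda_\sigma^p\lambda_\tau^q,
\end{equation*}
and then pass to $p':=\max\{p,q\}$ (using $\lambda_\bullet\geq 1$) to obtain the asserted bound with $C=C_0$.

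For sufficiency, assume $|G(\sigma,\tau)|\leq C\lambda_\sigma^p\lambda_\tau^p$ for all $\sigma,\tau\in\Gamma$. The plan is to first build, for each $\sigma$, a candidate $\Phi_\sigma\in\mathcal{S}^*$ whose Fock coefficients are $G(\sigma,\cdot)$, and then define $\mathsf{T}$ on $\mathcal{S}$ through the basis expansion of Lemma~\ref{lem-2-5}. I would choose $q$ with $2(q-p)>1$, e.g.\ $q=p+1$, so that Lemma~\ref{lem-2-1} makes $M:=\sum_{\tau\in\Gamma}\lambda_\tau^{2p-2q}$ finite. The hypothesis then gives
\begin{equation*}
\sum_{\tau\in\Gamma}\lambda_\tau^{-2q}|G(\sigma,\tau)|^2\leq C^2 M\lambda_\sigma^{2p},
\end{equation*}
so Lemma~\ref{lem-2-6} (together with the standard dual-basis construction in $\mathcal{S}_q^*$) produces a unique $\Phi_\sigma\in\mathcal{S}_q^*$ with $\langle\!\langle\Phi_\sigma,Z_\tau\rangle\!\rangle=G(\sigma,\tau)$ and $\|\Phi_\sigma\|_{-q}\leq C\sqrt{M}\,\lambda_\sigma^p$. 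For general $\xi\in\mathcal{S}$, I then set $\mathsf{T}\xi:=\sum_\sigma\langle Z_\sigma,\xi\rangle\Phi_\sigma$; picking $r>1/2$ so that $N:=\sum_\sigma\lambda_\sigma^{-2r}<\infty$ (again by Lemma~\ref{lem-2-1}), Cauchy--Schwarz yields $\sum_\sigma|\langle Z_\sigma,\xi\rangle|\,\|\Phi_\sigma\|_{-q}\leq C\sqrt{MN}\,\|\xi\|_{p+r}$, so the series converges absolutely in $\mathcal{S}_q^*$ and $\mathsf{T}$ is continuous from $\mathcal{S}$ into $\mathcal{S}_q^*\subset\mathcal{S}^*$. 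Finally, $\mathsf{T}Z_\sigma=\Phi_\sigma$ delivers $\widehat{\mathsf{T}}=G$.

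The sharpest technical point, and the one I expect to require the most care, is the factorization step in the necessity half: extracting a single pair $(p,q)$ and a uniform estimate $\|\mathsf{T}\xi\|_{-q}\leq C_0\|\xi\|_p$ from the abstract continuity of $\mathsf{T}\colon\mathcal{S}\to\mathcal{S}^*$. I would justify it via the standard fact that a continuous linear map from a Fréchet space into a strict (LF)-space factors through one of its steps---equivalently, maps some neighborhood of $0$ into a bounded subset of some $\mathcal{S}_q^*$---and the nuclearity of $\mathcal{S}$ furnished by Lemma~\ref{lem-2-3} ensures this machinery applies here. Everything else in the argument reduces to two applications of Lemma~\ref{lem-2-1} and a single Cauchy--Schwarz estimate.
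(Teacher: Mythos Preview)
The paper does not supply its own proof of this lemma: it is quoted from \cite{wang-chen-2} and stated without argument, so there is no in-paper proof to compare against. Your proposal is correct as a self-contained proof. The sufficiency half is in fact the same construction the paper carries out in the proof of Theorem~\ref{thr-3-2} (build $\Phi_\sigma\in\mathcal{S}_q^*$ via Lemma~\ref{lem-2-6}, sum against the basis expansion of $\xi$, and control the series by Cauchy--Schwarz and Lemma~\ref{lem-2-1}), so your approach there is exactly in the spirit of the surrounding text. For the necessity half, your identification of the factorization step as the crux is accurate; the Grothendieck-type argument you sketch (a continuous linear map from a Fr\'echet space into a strict inductive limit lands in one step $\mathcal{S}_q^*$, after which continuity on the Fr\'echet side produces a single $\|\cdot\|_p$) is the standard route and is what \cite{wang-chen-2} uses.
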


For $p\geq 0$, we denote by $\mathfrak{L}(\mathcal{S}_p,\mathcal{S}_p^*)$ the Banach space of all bounded linear operators
from $\mathcal{S}_p$ to $\mathcal{S}_p^*$ and by $\|\cdot\|_{\mathfrak{L}(\mathcal{S}_p,\mathcal{S}_p^*)}$ the usual operator norm
in $\mathfrak{L}(\mathcal{S}_p,\mathcal{S}_p^*)$, which is given by
\begin{equation}\label{eq-3-3}
  \|\mathsf{T}\|_{\mathfrak{L}(\mathcal{S}_p,\mathcal{S}_p^*)}
  = \sup\{ \|\mathsf{T}\xi\|_{-p} \mid \xi\in \mathcal{S}_p,\, \|\xi\|_p = 1 \},\quad \mathsf{T}\in \mathfrak{L}(\mathcal{S}_p,\mathcal{S}_p^*).
\end{equation}
Note that $\mathcal{S}$ is dense in $\mathcal{S}_p$.
Thus, for each bounded linear operator $\mathsf{A} \colon (\mathcal{S}, \|\cdot\|_p)\rightarrow \mathcal{S}_p^*$, there exists
a unique bounded linear operator $\widetilde{\mathsf{A}}\in \mathfrak{L}(\mathcal{S}_p,\mathcal{S}_p^*)$ such that
$\widetilde{\mathsf{A}}\xi = \mathsf{A}\xi$,\ $\forall\, \xi\in \mathcal{S}$ and
\begin{equation*}
  \|\widetilde{\mathsf{A}}\|_{\mathfrak{L}(\mathcal{S}_p,\mathcal{S}_p^*)}
  = \sup\{\|\mathsf{A}\xi\|_{-p} \mid \xi\in \mathcal{S},\, \|\xi\|_p = 1\}.
\end{equation*}
The operator $\widetilde{\mathsf{A}}$ is usually known as the norm-keeping extension of the operator $\mathsf{A}$ to $\mathcal{S}_p$.

The next theorem is a result about the regularity of continuous linear operator from $\mathcal{S}$ to $\mathcal{S}^*$,
which will play an important role in proving our main results.

\begin{theorem}\label{thr-3-2}
Let $\mathsf{T}\colon \mathcal{S}\rightarrow \mathcal{S}^*$ be a continuous linear operator.
Suppose that $\mathsf{T}$ satisfies
\begin{equation}\label{eq-3-4}
 |\widehat{\mathsf{T}}(\sigma,\tau)|
  \leq C\lambda_{\sigma}^p\lambda_{\tau}^p,\quad  \sigma,\, \tau \in \Gamma
\end{equation}
for some constants $C\geq 0$ and $p\geq 0$. Then, for $q> p+\frac{1}{2}$, there exists a unique $\widetilde{\mathsf{T}} \in \mathfrak{L}(\mathcal{S}_q,\mathcal{S}_q^*)$
such that
\begin{equation}\label{eq-3-5}
  \big\|\widetilde{\mathsf{T}}\big\|_{\mathfrak{L}(\mathcal{S}_q,\mathcal{S}_q^*)}\leq C\sum_{\sigma\in \Gamma}\lambda_{\sigma}^{-2(q-p)}
\end{equation}
and $\widetilde{\mathsf{T}}\xi =\mathsf{T}\xi$\ for all\ $\xi\in \mathcal{S}$.
\end{theorem}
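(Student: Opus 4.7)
The plan is to prove the estimate $\|\mathsf{T}\xi\|_{-q} \leq C\,\bigl(\sum_{\sigma}\lambda_{\sigma}^{-2(q-p)}\bigr)\|\xi\|_q$ for every $\xi\in\mathcal{S}$, and then invoke the density of $\mathcal{S}$ in $\mathcal{S}_q$ to produce the required norm-keeping extension $\widetilde{\mathsf{T}}\in\mathfrak{L}(\mathcal{S}_q,\mathcal{S}_q^*)$. The hypothesis $q>p+\tfrac{1}{2}$, i.e. $2(q-p)>1$, will be used precisely to ensure that the numerical series $\sum_{\sigma\in\Gamma}\lambda_{\sigma}^{-2(q-p)}$ converges via Lemma~\ref{lem-2-1}.

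First, I would fix $\xi\in\mathcal{S}$ and expand it as $\xi=\sum_{\sigma}\langle Z_{\sigma},\xi\rangle Z_{\sigma}$ using Lemma~\ref{lem-2-5}. Since $\mathsf{T}$ is continuous from $\mathcal{S}$ to $\mathcal{S}^*$ and the pairing $\langle\!\langle\cdot,Z_\tau\rangle\!\rangle$ is continuous on $\mathcal{S}^*$, one obtains for each $\tau\in\Gamma$
\begin{equation*}
\langle\!\langle \mathsf{T}\xi,Z_{\tau}\rangle\!\rangle
=\sum_{\sigma\in\Gamma}\langle Z_{\sigma},\xi\rangle\,\widehat{\mathsf{T}}(\sigma,\tau).
\end{equation*}
Then, using the hypothesized bound $|\widehat{\mathsf{T}}(\sigma,\tau)|\leq C\lambda_{\sigma}^{p}\lambda_{\tau}^{p}$ together with the factorization $\lambda_{\sigma}^{p}=\lambda_{\sigma}^{-(q-p)}\cdot\lambda_{\sigma}^{q}$, I would apply the Cauchy--Schwarz inequality to estimate
\begin{equation*}
|\langle\!\langle \mathsf{T}\xi,Z_{\tau}\rangle\!\rangle|
\leq C\lambda_{\tau}^{p}\Bigl(\sum_{\sigma}\lambda_{\sigma}^{-2(q-p)}\Bigr)^{1/2}\Bigl(\sum_{\sigma}\lambda_{\sigma}^{2q}|\langle Z_{\sigma},\xi\rangle|^{2}\Bigr)^{1/2}
=C\lambda_{\tau}^{p}\Bigl(\sum_{\sigma}\lambda_{\sigma}^{-2(q-p)}\Bigr)^{1/2}\|\xi\|_{q}.
\end{equation*}

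Next, I would square, multiply by $\lambda_{\tau}^{-2q}$, and sum over $\tau$. Lemma~\ref{lem-2-6} tells me that the resulting sum is exactly $\|\mathsf{T}\xi\|_{-q}^{2}$, provided it is finite. The right-hand side becomes $C^{2}\|\xi\|_{q}^{2}\bigl(\sum_{\sigma}\lambda_{\sigma}^{-2(q-p)}\bigr)\cdot\bigl(\sum_{\tau}\lambda_{\tau}^{-2(q-p)}\bigr)$, which is finite by Lemma~\ref{lem-2-1} since $2(q-p)>1$. Taking square roots yields the desired operator-norm bound and, in particular, shows that $\mathsf{T}\xi\in\mathcal{S}_{q}^{*}$.

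Finally, having shown that $\mathsf{T}$ restricted to $\mathcal{S}$ is a bounded linear map from $(\mathcal{S},\|\cdot\|_{q})$ into $\mathcal{S}_{q}^{*}$ with operator norm at most $C\sum_{\sigma}\lambda_{\sigma}^{-2(q-p)}$, I would invoke the density of $\mathcal{S}$ in $\mathcal{S}_{q}$ and the completeness of $\mathcal{S}_{q}^{*}$ to obtain a unique norm-preserving extension $\widetilde{\mathsf{T}}\in\mathfrak{L}(\mathcal{S}_{q},\mathcal{S}_{q}^{*})$ agreeing with $\mathsf{T}$ on $\mathcal{S}$. I do not anticipate a genuinely hard step: the main subtlety is just the bookkeeping that ensures the Cauchy--Schwarz split distributes the exponent $p$ into the summable tail $\lambda_{\sigma}^{-2(q-p)}$ and the $\|\cdot\|_{q}$-controlled part $\lambda_{\sigma}^{2q}$, and that Fubini/summation interchanges are legal — all of which follow because every sum is ultimately bounded by a convergent series guaranteed by Lemma~\ref{lem-2-1}.
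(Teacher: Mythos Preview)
Your proposal is correct and follows essentially the same approach as the paper: both arguments use the bound on $\widehat{\mathsf{T}}$, apply Cauchy--Schwarz once and a direct $\sum_{\gamma}\lambda_{\gamma}^{-2(q-p)}$ summation once (you do Cauchy--Schwarz in $\sigma$ first and then sum in $\tau$, while the paper first bounds $\|\mathsf{T}Z_{\sigma}\|_{-q}$ by summing in $\tau$ and then applies the triangle inequality plus Cauchy--Schwarz in $\sigma$), and finish by extending from the dense subspace $\mathcal{S}\subset\mathcal{S}_q$. The two computations yield the identical operator-norm bound, so there is no substantive difference.
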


\begin{proof}
By Lemma~\ref{lem-2-1}, we know that $\sum_{\tau\in \Gamma}\lambda_{\tau}^{-2(q-p)}<\infty$ since $2(q-p)>1$.
Let $\sigma\in \Gamma$ be given. Then, $\mathsf{T}Z_{\sigma}\in \mathcal{S}^*$ and, by using the assumption (\ref{eq-3-4}),
we find
\begin{equation*}
  \sum_{\tau\in \Gamma}\lambda_{\tau}^{-2q}|\langle\!\langle \mathsf{T} Z_{\sigma}, Z_{\tau} \rangle\!\rangle|^2
  = \sum_{\tau\in \Gamma}\lambda_{\tau}^{-2q}|\widehat{\mathsf{T}} (\sigma, \tau)|^2
  \leq C^2\lambda_{\sigma}^{2p}\sum_{\tau\in \Gamma}\lambda_{\tau}^{-2(q-p)}
  <\infty,
\end{equation*}
which, together with Lemma~\ref{lem-2-6}, implies that $\mathsf{T}Z_{\sigma}\in \mathcal{S}_q^*$ and
\begin{equation*}
  \|\mathsf{T}Z_{\sigma}\|_{-q}^2
  = \sum_{\tau\in \Gamma}\lambda_{\tau}^{-2q}|\langle\!\langle \mathsf{T} Z_{\sigma}, Z_{\tau} \rangle\!\rangle|^2
  \leq C^2\lambda_{\sigma}^{2p}\sum_{\tau\in \Gamma}\lambda_{\tau}^{-2(q-p)}.
\end{equation*}
Now take $\xi\in \mathcal{S}$. Then $\sum_{\sigma\in \Gamma}\langle Z_{\sigma}, \xi\rangle \mathsf{T}Z_{\sigma}$ is a series in $\mathcal{S}_q^*$.
And, by using the above inequality, we have
\begin{equation*}
\begin{split}
  \sum_{\sigma\in \Gamma}\|\langle Z_{\sigma}, \xi\rangle \mathsf{T}Z_{\sigma}\|_{-q}
   & \leq \Big[\sum_{\sigma\in \Gamma} \lambda_{\sigma}^{2q}|\langle Z_{\sigma}, \xi\rangle|^2\Big]^{\frac{1}{2}}
      \Big[\sum_{\sigma\in \Gamma} \lambda_{\sigma}^{-2q}\|\mathsf{T}Z_{\sigma}\|_{-q}^2\Big]^{\frac{1}{2}}\\
   & \leq \|\xi\|_q \Big[\sum_{\sigma\in \Gamma} \lambda_{\sigma}^{-2q}C^2\lambda_{\sigma}^{2p}\sum_{\tau\in \Gamma}\lambda_{\tau}^{-2(q-p)}\Big]^{\frac{1}{2}}\\
   & = C \Big[\sum_{\sigma\in \Gamma}\lambda_{\sigma}^{-2(q-p)}\Big] \|\xi\|_q,
\end{split}
\end{equation*}
which implies that the series $\sum_{\sigma\in \Gamma}\langle Z_{\sigma}, \xi\rangle \mathsf{T}Z_{\sigma}$ converges in $\mathcal{S}_q^*$,
hence its sum $\sum_{\sigma\in \Gamma}\langle Z_{\sigma}, \xi\rangle \mathsf{T}Z_{\sigma}$ belongs to $\mathcal{S}_q^*$.
On the other hand, by Lemma~\ref{lem-2-5} and the continuity of $\mathsf{T}\colon \mathcal{S} \rightarrow \mathcal{S}^*$, we can get
\begin{equation*}
  \mathsf{T}\xi = \sum_{\sigma\in \Gamma}\langle Z_{\sigma}, \xi\rangle \mathsf{T}Z_{\sigma}.
\end{equation*}
Thus $\mathsf{T}\xi\in \mathcal{S}_q^*$ and
\begin{equation*}
  \|\mathsf{T}\xi\|_{-q}
  \leq \sum_{\sigma\in \Gamma}\|\langle Z_{\sigma}, \xi\rangle \mathsf{T}Z_{\sigma}\|_{-q}
  \leq C \Big[\sum_{\sigma\in \Gamma}\lambda_{\sigma}^{-2(q-p)}\Big] \|\xi\|_q,
\end{equation*}
which, together with the arbitrariness of $\xi\in \mathcal{S}$, implies that $\mathsf{T}$ is a bounded linear operator from $(\mathcal{S},\|\cdot\|_q)$
to $\mathcal{S}_q^*$. Therefore, there exists a unique $\widetilde{\mathsf{T}} \in \mathfrak{L}(\mathcal{S}_q,\mathcal{S}_q^*)$
such that $\widetilde{\mathsf{T}}\xi =\mathsf{T}\xi$, $x\in \mathcal{S}$ and
\begin{equation*}
  \big\|\widetilde{\mathsf{T}}\big\|_{\mathfrak{L}(\mathcal{S}_q,\mathcal{S}_q^*)}
  = \sup\{\|\mathsf{T}\xi\|_{-q} \mid \xi \in S,\, \|\xi\|_q=1\}
  \leq C \sum_{\sigma\in \Gamma}\lambda_{\sigma}^{-2(q-p)}.
\end{equation*}
This completes the proof.
\end{proof}

\section{Spectral integrals of Bernoulli generalized functionals}\label{sec-4}

In the present section, we define integrals of Bernoulli generalized functionals with respect to a spectral measure on the Bernoulli space
and examine their fundamental properties.

We continue to use the notation fixed in previous sections.
Additionally, we denote by $\mathfrak{P}(\mathcal{L}^2)$ the set of all projection operators on $\mathcal{L}^2$, which is a subset of the Banach algebra $\mathfrak{B}(\mathcal{L}^2)$
of all bounded linear operators on $\mathcal{L}^2$,
and by $\mathfrak{L}(\mathcal{S},\mathcal{S}^*)$ the space of all continuous linear operators
from $\mathcal{S}$ to $\mathcal{S}^*$.

Recall that the Bernoulli space $(\Sigma, \mathscr{A}, \mu)$ is actually a probability measure space.
This naturally leads to the next definition.

\begin{definition}\label{def-4-1}
A mapping $\pi\colon \mathscr{A}\rightarrow \mathfrak{P}(\mathcal{L}^2)$ is called a spectral measure
on $(\Sigma, \mathscr{A}, \mu)$ if it satisfies the following two requirements:
\begin{enumerate}
  \item[(1)] $\pi(\Sigma)=I$, where $I$ denotes the identity operator on $\mathcal{L}^2$;
  \item[(2)] For each sequence $(E_n)_{n\geq 1}\subset \mathscr{A}$ with $E_m \cap E_n = \emptyset$ when $m\neq n$, it holds true that
  \begin{equation}\label{eq-4-1}
    \pi\Big(\bigcup_{n=1}^{\infty}E_n\Big) = \sum_{n=1}^{\infty}\pi(E_n),
  \end{equation}
\end{enumerate}
where the operator series on the right-hand side converges strongly, namely in the strong operator topology of $\mathfrak{B}(\mathcal{L}^2)$.
\end{definition}

A spectral measure admits many interesting properties. The next lemma just shows the most striking one,
which is well known in the theory of functional analysis (see, e.g. \cite{parth}).

\begin{lemma}\label{lem-4-1}
If $\pi\colon \mathscr{A} \rightarrow \mathfrak{P}(\mathcal{L}^2)$ is a spectral measure on $(\Sigma,\mathscr{A},\mu)$, then for all
$E_1$, $E_2\in \mathscr{A}$ it holds true that
\begin{equation}\label{eq-4-2}
\pi(E_1\cap E_2)=\pi(E_1)\pi(E_2),
\end{equation}
where $\pi(E_1)\pi(E_2)$ just means the usual composition of operators $\pi(E_1)$ and $\pi(E_2)$.
\end{lemma}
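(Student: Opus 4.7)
The plan is to reduce the general case to the disjoint case, where the multiplicativity identity becomes a statement about orthogonality of projections. First I would record the preliminary fact that $\pi(\emptyset) = 0$, which follows from countable additivity applied to the sequence $\emptyset, \emptyset, \ldots$: the relation $\pi(\emptyset) = \sum_{n=1}^\infty \pi(\emptyset)$ in the strong operator topology forces $\pi(\emptyset)$ to be the zero operator. I would also note the special case of finite additivity for disjoint pairs, obtained by appending $\emptyset$'s to a finite sequence: if $E_1 \cap E_2 = \emptyset$, then $\pi(E_1 \cup E_2) = \pi(E_1) + \pi(E_2)$.

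The heart of the argument is the disjoint case: I claim $\pi(E_1)\pi(E_2) = 0$ whenever $E_1 \cap E_2 = \emptyset$. To see this, set $P = \pi(E_1)$, $Q = \pi(E_2)$, and $R = \pi(E_1 \cup E_2) = P + Q$. Since $R$ is a projection, $R^2 = R$, which expanded gives $P^2 + PQ + QP + Q^2 = P + Q$; using $P^2 = P$ and $Q^2 = Q$, this reduces to $PQ + QP = 0$. Multiplying this identity on the left by $P$ and on the right by $P$ respectively gives $PQ + PQP = 0$ and $PQP + QP = 0$, whose difference yields $PQ = QP$. Substituting back into $PQ + QP = 0$ produces $2PQ = 0$, hence $\pi(E_1)\pi(E_2) = 0 = \pi(E_1 \cap E_2)$, as required.

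For arbitrary $E_1, E_2 \in \mathscr{A}$, I would decompose them into the three pairwise disjoint pieces $A = E_1 \setminus E_2$, $B = E_2 \setminus E_1$, and $C = E_1 \cap E_2$, so that $E_1 = A \sqcup C$ and $E_2 = B \sqcup C$. Finite additivity gives $\pi(E_1) = \pi(A) + \pi(C)$ and $\pi(E_2) = \pi(B) + \pi(C)$. Expanding the product $\pi(E_1)\pi(E_2)$ produces four terms; by the disjointness result of the previous paragraph, the three mixed terms $\pi(A)\pi(B)$, $\pi(A)\pi(C)$, and $\pi(C)\pi(B)$ all vanish, leaving only $\pi(C)^2 = \pi(C) = \pi(E_1 \cap E_2)$. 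This completes the argument.

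The main obstacle, modest as it is, lies in the algebraic juggling of step two, specifically pulling out the commutation relation $PQ = QP$ from the single identity $PQ + QP = 0$ before concluding $PQ = 0$. Everything else is a straightforward application of finite additivity (itself a consequence of the countable additivity axiom together with $\pi(\emptyset) = 0$) and the set-theoretic decomposition.
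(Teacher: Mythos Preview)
Your argument is correct and is a standard textbook derivation of the multiplicativity property of a spectral measure. The paper, however, does not supply any proof of this lemma: it simply states the result as well known and cites Parthasarathy's monograph. So there is nothing to compare at the level of technique; you have filled in a proof where the authors chose to quote the literature.

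One minor remark: since the projections in $\mathfrak{P}(\mathcal{L}^2)$ are \emph{orthogonal} (self-adjoint) projections, the disjoint case can be shortened slightly by observing that $PQ + QP = 0$ together with $(PQ)^* = QP$ already gives $PQ = -(PQ)^*$, and then $PQ = P(PQ) = -P(QP)$, etc.; but your purely algebraic manipulation, which works even for idempotents on a Banach space, is perfectly valid and arguably cleaner.
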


Let $\pi\colon \mathscr{A} \rightarrow \mathfrak{P}(\mathcal{L}^2)$ be a spectral measure on $(\Sigma,\mathscr{A},\mu)$. Then,
for fixed $\xi$, $\eta\in \mathcal{L}^2$, the function
\begin{equation*}
E\mapsto \langle \pi(E)\xi, \eta\rangle
\end{equation*}
defines a complex-valued measure on the measurable space $(\Sigma,\mathscr{A})$.
In particular, for $\sigma$, $\tau\in \Gamma$, the function
$E\mapsto \langle \pi(E)Z_{\sigma}, Z_{\tau}\rangle$ is a complex-valued measure on $(\Sigma,\mathscr{A})$.

\begin{definition}\label{def-4-2}
A spectral measure $\pi\colon \mathscr{A} \rightarrow \mathfrak{P}(\mathcal{L}^2)$ on $(\Sigma,\mathscr{A},\mu)$ is said
to be $\mathcal{S}$-smooth if for each pair $(\sigma,\tau)\in \Gamma\times \Gamma$ there exists a Bernoulli testing functional
$\phi_{\sigma,\tau}^{\pi}\in \mathcal{S}$ such that
\begin{equation}\label{eq-4-3}
  \langle \pi(E) Z_{\sigma}, Z_{\tau}\rangle = \int_E \phi_{\sigma,\tau}^{\pi}d\mu,\quad \forall\, E\in \mathscr{A}.
\end{equation}
In that case, $\phi_{\sigma,\tau}^{\pi}$ is called the numerical density of $\pi$ associated with $(\sigma, \tau)\in \Gamma\times \Gamma$.
\end{definition}

For a nonnegative integer $n\geq 0$, we write $\Gamma\!_n=\{\sigma \mid \sigma\subset \mathbb{N}_n\}$, where $\mathbb{N}_n=\{0,1,\cdots, n\}$.
Obviously, $\Gamma\!_n\subset \Gamma\!_{n+1}\subset \Gamma$ for all $n\geq 0$, and $\bigcup_{n=0}^{\infty}\Gamma\!_n=\Gamma$.
In particular, $\Gamma\!_n$ has exactly $2^{n+1}$ elements.

\begin{proposition}\label{prop-4-2}
Let $\pi\colon \mathscr{A} \rightarrow \mathfrak{P}(\mathcal{L}^2)$ be a $\mathcal{S}$-smooth spectral measure on $(\Sigma,\mathscr{A},\mu)$
and $\phi_{\sigma,\tau}^{\pi}$ its numerical spectral density
associated with $(\sigma,\tau)\in \Gamma\times \Gamma$. Then, for all $n\geq 0$ and all $\xi\in \mathcal{L}^2$, it holds true that
\begin{equation}\label{eq-4-4}
  \sum_{\sigma, \tau \in \Gamma\!_n}\overline{\langle Z_{\sigma}, \xi\rangle}\langle Z_{\tau}, \xi\rangle \phi_{\sigma,\tau}^{\pi}\geq 0\quad
  \mbox{$\mu$-a.e. in $\Sigma$},
\end{equation}
where $\sum_{\sigma, \tau \in \Gamma\!_n}\overline{\langle Z_{\sigma}, \xi\rangle}\langle Z_{\tau}, \xi\rangle \phi_{\sigma,\tau}^{\pi}$ is viewed as a function on $\Sigma$.
\end{proposition}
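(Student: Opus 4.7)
The plan is to recognize the left-hand side of (\ref{eq-4-4}) as (the density of) a nonnegative measure, namely $E\mapsto \|\pi(E)\xi_n\|^2$, where $\xi_n$ is an appropriate finite truncation of $\xi$. Once this is established, nonnegativity $\mu$-a.e.\ follows from the standard fact that an integrable function whose integral over every measurable set is nonnegative must itself be nonnegative a.e. (after one first checks that the function is real-valued a.e.).

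Concretely, fix $n\geq 0$ and $\xi\in \mathcal{L}^2$, and set
\begin{equation*}
\xi_n = \sum_{\sigma\in \Gamma\!_n}\langle Z_{\sigma},\xi\rangle Z_{\sigma}\in \mathcal{L}^2,
\qquad
f_n = \sum_{\sigma,\tau\in \Gamma\!_n}\overline{\langle Z_{\sigma},\xi\rangle}\langle Z_{\tau},\xi\rangle\,\phi_{\sigma,\tau}^{\pi}.
\end{equation*}
First I would compute, for any $E\in \mathscr{A}$, using Definition~\ref{def-4-2}, the conjugate-bilinearity of $\langle\cdot,\cdot\rangle$, and the fact that $\pi(E)$ is an orthogonal projection (so $\pi(E)=\pi(E)^{*}\pi(E)$),
\begin{equation*}
\int_E f_n\,d\mu
= \sum_{\sigma,\tau\in \Gamma\!_n}\overline{\langle Z_{\sigma},\xi\rangle}\langle Z_{\tau},\xi\rangle\langle \pi(E)Z_{\sigma},Z_{\tau}\rangle
= \langle \pi(E)\xi_n,\xi_n\rangle = \|\pi(E)\xi_n\|^2 \geq 0.
\end{equation*}
This is the crux; everything else is routine.

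Second, I would observe that $f_n$ is real-valued $\mu$-a.e. Indeed, self-adjointness of $\pi(E)$ gives $\overline{\langle \pi(E)Z_{\sigma},Z_{\tau}\rangle}=\langle \pi(E)Z_{\tau},Z_{\sigma}\rangle$ for every $E\in \mathscr{A}$, and hence $\overline{\phi_{\sigma,\tau}^{\pi}}=\phi_{\tau,\sigma}^{\pi}$ $\mu$-a.e.; swapping the summation indices $\sigma\leftrightarrow \tau$ in the definition of $f_n$ then yields $\overline{f_n}=f_n$ $\mu$-a.e. Note that each $\phi_{\sigma,\tau}^{\pi}$ lies in $\mathcal{S}\subset \mathcal{L}^2\subset \mathcal{L}^1(\Sigma,\mathscr{A},\mu)$, so the finite sum $f_n$ is $\mu$-integrable.

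Third, applying the nonnegative-integral step: if $\mu(\{f_n<0\})>0$, then taking $E=\{f_n<0\}$ would force $\int_E f_n\,d\mu<0$, contradicting the inequality just established. Hence $f_n\geq 0$ $\mu$-a.e., which is exactly (\ref{eq-4-4}). The main (and only genuine) obstacle is the bookkeeping in the very first display, where one has to match the conjugate-linear convention of $\langle\cdot,\cdot\rangle$ with the two sums $\sum_{\sigma}$ and $\sum_{\tau}$ so as to recognize the quadratic form $\langle \pi(E)\xi_n,\xi_n\rangle$; once that identification is made, the proof is essentially complete.
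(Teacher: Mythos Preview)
Your proof is correct and follows essentially the same route as the paper: integrate the finite sum over an arbitrary $E\in\mathscr{A}$, recognize the result as $\|\pi(E)\xi_n\|^2\geq 0$, and conclude nonnegativity $\mu$-a.e.\ from the arbitrariness of $E$. Your explicit verification that $f_n$ is real-valued $\mu$-a.e.\ is a small bonus of rigor that the paper omits (though it also follows automatically from the fact that $\int_E f_n\,d\mu\in\mathbb{R}$ for every $E$).
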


\begin{proof}
Let $n\geq 0$ and $\xi\in \mathcal{L}^2$ be given. Then, for any $E\in \mathscr{A}$, by using the fact of $\pi(E)$ being a projection operator on $\mathcal{L}^2$ we have
\begin{equation*}
\begin{split}
  \int_E\Big(\sum_{\sigma, \tau \in \Gamma\!_n}\overline{\langle Z_{\sigma}, \xi\rangle}\langle Z_{\tau}, \xi\rangle \phi_{\sigma,\tau}^{\pi}\Big)d\mu
  & = \sum_{\sigma, \tau \in \Gamma\!_n}\overline{\langle Z_{\sigma}, \xi\rangle}\langle Z_{\tau}, \xi\rangle \int_E \phi_{\sigma,\tau}^{\pi}d\mu\\
  & = \sum_{\sigma, \tau \in \Gamma\!_n}\overline{\langle Z_{\sigma}, \xi\rangle}\langle Z_{\tau}, \xi\rangle \langle \pi(E) Z_{\sigma}, Z_{\tau}\rangle\\
  & = \Big\langle \pi(E) \sum_{\sigma\in \Gamma\!_n}\langle Z_{\sigma}, \xi\rangle Z_{\sigma}, \sum_{\tau \in \Gamma\!_{n}}\langle Z_{\tau}, \xi\rangle Z_{\tau}\Big\rangle\\
  & = \Big\|\pi(E) \sum_{\sigma\in \Gamma\!_n}\langle Z_{\sigma}, \xi\rangle Z_{\sigma}\Big\|^2\\
  & \geq 0,
\end{split}
\end{equation*}
which together with the arbitrariness of $E\in \mathscr{A}$ implies that
\begin{equation*}
  \sum_{\sigma, \tau \in \Gamma\!_n}\overline{\langle Z_{\sigma}, \xi\rangle}\langle Z_{\tau}, \xi\rangle \phi_{\sigma,\tau}^{\pi}\geq 0\quad
  \mbox{$\mu$-a.e. in $\Sigma$},
\end{equation*}
namely, as a function on $\Sigma$,
$\sum_{\sigma, \tau \in \Gamma\!_n}\overline{\langle Z_{\sigma}, \xi\rangle}\langle Z_{\tau}, \xi\rangle \phi_{\sigma,\tau}^{\pi}$
takes nonnegative values at almost all points in $\Sigma$.
\end{proof}

\begin{definition}\label{def-4-3}
Let $\pi\colon \mathscr{A} \rightarrow \mathfrak{P}(\mathcal{L}^2)$ be a $\mathcal{S}$-smooth spectral measure on $(\Sigma,\mathscr{A},\mu)$
and $\phi_{\sigma,\tau}^{\pi}$ its numerical spectral density
associated with $(\sigma,\tau)\in \Gamma\times \Gamma$. A Bernoulli generalized functional $\Phi \in \mathcal{S}^*$ is said to be integrable with respect to $\pi$ if there exist constants $C\geq 0$ and $p\geq 0$ such that
\begin{equation}\label{eq-4-5}
  \big|\big\langle\!\big\langle \Phi, \phi_{\sigma,\tau}^{\pi}\big\rangle\!\big\rangle\big|\leq C\lambda_{\sigma}^p\lambda_{\tau}^p,\quad
  \forall\, (\sigma,\tau)\in \Gamma\times \Gamma.
\end{equation}
In that case, by Lemma~\ref{lem-3-1}, there exists a unique operator $\mathsf{T}_{\Phi,\pi}\in \mathfrak{L}(\mathcal{S},\mathcal{S}^*)$ such that
\begin{equation}\label{eq-4-6}
  \widehat{\mathsf{T}_{\Phi,\pi}}(\sigma, \tau)
  = \big\langle\!\big\langle \Phi, \phi_{\sigma,\tau}^{\pi}\big\rangle\!\big\rangle,\quad
  \forall\, (\sigma,\tau)\in \Gamma\times \Gamma.
\end{equation}
We call $\mathsf{T}_{\Phi,\pi}$ the spectral integral of $\Phi$ with respect to $\pi$ and write $\int_{\Sigma}\Phi d\pi=\mathsf{T}_{\Phi,\pi}$.
\end{definition}

In the rest of the present section, we always assume that $\pi\colon \mathscr{A} \rightarrow \mathfrak{P}(\mathcal{L}^2)$ is
a fixed $\mathcal{S}$-smooth spectral measure on $(\Sigma,\mathscr{A},\mu)$
and $\phi_{\sigma,\tau}^{\pi}$ its numerical spectral density associated with $(\sigma,\tau)\in \Gamma\times \Gamma$.
Thus, if a Bernoulli generalized functional $\Phi$ is integrable with respect to $\pi$,
then its spectral integral $\int_{\Sigma} \Phi d\pi$ is a continuous linear operator from $\mathcal{S}$ to $\mathcal{S}^*$,
namely $\int_{\Sigma} \Phi d\pi\in \mathfrak{L}(\mathcal{S},\mathcal{S}^*)$, and satisfies that
\begin{equation*}
  \widehat{\int_{\Sigma} \Phi d\pi}(\sigma,\tau)
  = \big\langle\!\big\langle \big(\int_{\Sigma} \Phi d\pi\big)Z_{\sigma}, Z_{\tau}\big\rangle\!\big\rangle
  = \big\langle\!\big\langle \Phi, \phi_{\sigma,\tau}^{\pi}\big\rangle\!\big\rangle,\quad (\sigma,\tau)\in \Gamma\times \Gamma.
\end{equation*}

\begin{remark}\label{rem-4-1}
Let $\varphi\in \mathcal{L}^2$ be a bounded function on $\Sigma$ and $\int_{\Sigma}\varphi d\pi$ be the usual spectral integral
of $\varphi$ with respect to $\pi$, which is a bounded linear operator
on $\mathcal{L}^2$. Suppose that $\mathsf{R}_0\varphi$ is integrable with respect to $\pi$
in the sense of Definition~\ref{def-4-3}, where
$\mathsf{R}_0\colon \mathcal{L}^2\rightarrow (\mathcal{L}^2)^*$ denotes the Riesz mapping.
Then, the spectral integral $\int_{\Sigma} \mathsf{R}_0\varphi d\pi$ in the sense of Definition~\ref{def-4-3}
admits the following features
\begin{equation*}
  \int_{\Sigma} \mathsf{R}_0\varphi d\pi = \mathsf{R}_0\int_{\Sigma} \varphi d\pi,
\end{equation*}
where $\mathsf{R}_0\int_{\Sigma} \varphi d\pi$ means the composition of operators $\mathsf{R}_0$
and $\int_{\Sigma} \varphi d\pi$. This justifies our Definition~\ref{def-4-3}.
\end{remark}

\begin{theorem}\label{thr-4-3}
Let $\Phi$, $\Psi \in \mathcal{S}^*$ be integrable with respect to $\pi$. Then, for all $\alpha$, $\beta\in \mathbb{C}$,
$\alpha \Phi + \beta \Psi$ remains integrable with respect to $\pi$, and moreover it holds true that
\begin{equation}\label{eq-4-7}
  \int_{\Sigma}\big(\alpha \Phi + \beta \Psi\big)d\pi =  \alpha\int_{\Sigma} \Phi d\pi  +  \beta\int_{\Sigma}\Psi d\pi.
\end{equation}
\end{theorem}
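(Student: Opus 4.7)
The plan is to verify the two assertions directly from Definition~\ref{def-4-3}, using the linearity of the canonical pairing $\langle\!\langle\cdot,\cdot\rangle\!\rangle$ in its first argument and the uniqueness of a continuous linear operator $\mathcal{S}\to\mathcal{S}^*$ determined by its 2D-Fock transform.

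First, I would establish integrability of $\alpha\Phi+\beta\Psi$. By assumption there exist constants $C_1,C_2\geq 0$ and $p_1,p_2\geq 0$ such that $|\langle\!\langle \Phi,\phi_{\sigma,\tau}^{\pi}\rangle\!\rangle|\leq C_1\lambda_{\sigma}^{p_1}\lambda_{\tau}^{p_1}$ and $|\langle\!\langle \Psi,\phi_{\sigma,\tau}^{\pi}\rangle\!\rangle|\leq C_2\lambda_{\sigma}^{p_2}\lambda_{\tau}^{p_2}$ for all $(\sigma,\tau)\in\Gamma\times\Gamma$. Setting $p=\max\{p_1,p_2\}$ and $C=|\alpha|C_1+|\beta|C_2$, the crucial ingredient is the monotonicity $\lambda_{\sigma}^{p_i}\leq\lambda_{\sigma}^{p}$, which follows from $\lambda_{\sigma}\geq 1$ for all $\sigma\in\Gamma$ (cf.\ Lemma~\ref{lem-2-1}). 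Linearity of $\langle\!\langle\cdot,\cdot\rangle\!\rangle$ in the first variable combined with the triangle inequality then yields
\begin{equation*}
\big|\big\langle\!\big\langle\alpha\Phi+\beta\Psi,\phi_{\sigma,\tau}^{\pi}\big\rangle\!\big\rangle\big|
\leq |\alpha|\,C_1\lambda_{\sigma}^{p_1}\lambda_{\tau}^{p_1}+|\beta|\,C_2\lambda_{\sigma}^{p_2}\lambda_{\tau}^{p_2}
\leq C\lambda_{\sigma}^{p}\lambda_{\tau}^{p},
\end{equation*}
which is precisely condition (\ref{eq-4-5}) for $\alpha\Phi+\beta\Psi$. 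Hence $\alpha\Phi+\beta\Psi$ is integrable with respect to $\pi$ and the operator $\int_{\Sigma}(\alpha\Phi+\beta\Psi)d\pi\in\mathfrak{L}(\mathcal{S},\mathcal{S}^*)$ is well-defined by Definition~\ref{def-4-3}.

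Next I would prove equality (\ref{eq-4-7}). Since continuous linear operators $\mathcal{S}\to\mathcal{S}^*$ are completely determined by their 2D-Fock transforms (stated after Definition~\ref{def-3-1}), it suffices to check that both sides have the same 2D-Fock transform on $\Gamma\times\Gamma$. Using (\ref{eq-4-6}) and again the linearity of the pairing, for every $(\sigma,\tau)\in\Gamma\times\Gamma$ I compute
\begin{equation*}
\widehat{\int_{\Sigma}(\alpha\Phi+\beta\Psi)d\pi}(\sigma,\tau)
=\big\langle\!\big\langle\alpha\Phi+\beta\Psi,\phi_{\sigma,\tau}^{\pi}\big\rangle\!\big\rangle
=\alpha\big\langle\!\big\langle\Phi,\phi_{\sigma,\tau}^{\pi}\big\rangle\!\big\rangle+\beta\big\langle\!\big\langle\Psi,\phi_{\sigma,\tau}^{\pi}\big\rangle\!\big\rangle,
\end{equation*}
which by (\ref{eq-4-6}) equals $\alpha\,\widehat{\int_{\Sigma}\Phi d\pi}(\sigma,\tau)+\beta\,\widehat{\int_{\Sigma}\Psi d\pi}(\sigma,\tau)$. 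The latter is clearly the 2D-Fock transform of the operator $\alpha\int_{\Sigma}\Phi d\pi+\beta\int_{\Sigma}\Psi d\pi$, so the uniqueness in Definition~\ref{def-4-3} yields (\ref{eq-4-7}).

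Frankly, I do not expect any genuine obstacle in this proof; the argument is essentially a matching-of-indices calculation. The only point that requires a moment of care is the comparison of the bounds with different exponents $p_1$ and $p_2$, which is resolved by the observation $\lambda_{\sigma}\geq 1$. Everything else is formal bilinearity together with the uniqueness statement for 2D-Fock transforms.
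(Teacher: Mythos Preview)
Your proposal is correct and follows essentially the same approach as the paper: first use $\lambda_{\sigma}\geq 1$ to absorb the two bounds into a single one with $p=\max\{p_1,p_2\}$ and $C=|\alpha|C_1+|\beta|C_2$, then compare 2D-Fock transforms and invoke their injectivity. The only cosmetic difference is that the paper writes $p\geq \max\{p_1,p_2\}$ rather than $p=\max\{p_1,p_2\}$, which is immaterial.
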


\begin{proof}
It follows from the integrability of $\Phi$ and $\Psi$ that there exist nonnegative constants $C_1$, $C_2$, $p_1$ and $p_2$ such that
\begin{equation*}
  \big|\big\langle\!\big\langle \Phi, \phi_{\sigma,\tau}^{\pi}\big\rangle\!\big\rangle\big|\leq C_1\lambda_{\sigma}^{p_1}\lambda_{\tau}^{p_1}\quad
  \mbox{and}\quad
  \big|\big\langle\!\big\langle \Psi, \phi_{\sigma,\tau}^{\pi}\big\rangle\!\big\rangle\big|\leq C_2\lambda_{\sigma}^{p_2}\lambda_{\tau}^{p_2},
  \quad  \forall\, (\sigma,\tau)\in \Gamma\times \Gamma.
\end{equation*}
Take $p\geq \max\{p_1,p_2\}$. Then, using the above inequalities, we obtain the bound
\begin{equation*}
  \big|\big\langle\!\big\langle \alpha \Phi + \beta \Psi, \phi_{\sigma,\tau}^{\pi}\big\rangle\!\big\rangle\big|
  \leq (|\alpha|C_1 + |\beta|C_2)\lambda_{\sigma}^p\lambda_{\tau}^p,\quad  \forall\, (\sigma,\tau)\in \Gamma\times \Gamma,
\end{equation*}
which means that $\alpha \Phi + \beta \Psi$ is integrable with respect to $\pi$.
For all $(\sigma,\tau)\in \Gamma\times \Gamma$, a straightforward calculation yields
\begin{equation*}
\begin{split}
  \widehat{\int_{\Sigma}\big(\alpha \Phi + \beta \Psi\big)d\pi}(\sigma,\tau)
  & = \big\langle\!\big\langle \alpha \Phi + \beta \Psi, \phi_{\sigma,\tau}^{\pi}\big\rangle\!\big\rangle\\
  & = \alpha\big\langle\!\big\langle  \Phi, \phi_{\sigma,\tau}^{\pi}\big\rangle\!\big\rangle
      + \beta \big\langle\!\big\langle\Psi, \phi_{\sigma,\tau}^{\pi}\big\rangle\!\big\rangle\\
  & = \alpha \widehat{\int_{\Sigma}\Phi d\pi}(\sigma,\tau) + \beta \widehat{\int_{\Sigma}\Psi d\pi}(\sigma,\tau)\\
  & = \widehat{\Big[\alpha \int_{\Sigma}\Phi d\pi + \beta \int_{\Sigma}\Psi d\pi\Big]}(\sigma,\tau),
\end{split}
\end{equation*}
which implies that $\int_{\Sigma}\big(\alpha \Phi + \beta \Psi\big)d\pi =  \alpha\int_{\Sigma} \Phi d\pi  +  \beta\int_{\Sigma}\Psi d\pi$,
see the comments after Definition~\ref{def-3-1}.
\end{proof}

For $\xi\in \mathcal{S}$, we use $\xi\geq 0$ to mean that $\xi(\omega)\geq 0$ for $\mu$-a.a. $\omega\in \Sigma$.
For $\Phi\in \mathcal{S}^*$, we use $\Phi\geq 0$ to mean that $\langle\!\langle \Phi, \xi\rangle\!\rangle\geq 0$ for all $\xi \in \mathcal{S}$ with $\xi\geq 0$.
In that case, we also say that $\Phi$ is a positive Bernoulli generalized functional.

\begin{theorem}\label{thr-4-4}
Let $\Phi \in \mathcal{S}^*$ be integrable with respect to $\pi$. Suppose that $\Phi\geq 0$. Then, for all $\xi \in \mathcal{S}$,
it holds true that
\begin{equation}\label{eq-4-8}
  \big\langle\!\big\langle \big(\!\int_{\Sigma} \Phi d\pi\big)\overline{\xi}, \xi\big\rangle\!\big\rangle \geq 0.
\end{equation}
\end{theorem}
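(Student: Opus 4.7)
The plan is to approximate $\xi$ by its finite partial expansions in the basis $\{Z_\sigma\}_{\sigma\in\Gamma}$, reduce the desired inequality to Proposition~\ref{prop-4-2} at each finite stage (where the positivity assumption on $\Phi$ can be applied directly), and then pass to the limit using the continuity of $\mathsf{T}_{\Phi,\pi}:=\int_\Sigma\Phi\,d\pi$. A preliminary observation is that $\overline{\xi}\in\mathcal{S}$: since each $Z_n$ defined in \eqref{eq-2-3} is real-valued (as $\zeta_n\in\{-1,1\}$), every basis vector $Z_\sigma$ is real, so $\langle Z_\sigma,\overline{\xi}\rangle=\overline{\langle Z_\sigma,\xi\rangle}$ and consequently $\|\overline{\xi}\|_p=\|\xi\|_p$ for every $p\geq 0$. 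I then introduce the truncations
$$
\xi_n := \sum_{\tau\in\Gamma_{\!n}}\langle Z_\tau,\xi\rangle Z_\tau,
\qquad
\overline{\xi}_n := \sum_{\sigma\in\Gamma_{\!n}}\overline{\langle Z_\sigma,\xi\rangle}Z_\sigma,
$$
which by Lemma~\ref{lem-2-5} satisfy $\xi_n\to\xi$ and $\overline{\xi}_n\to\overline{\xi}$ in the topology of $\mathcal{S}$.

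The central identity follows from expanding $\langle\!\langle \mathsf{T}_{\Phi,\pi}\overline{\xi}_n,\xi_n\rangle\!\rangle$ via linearity of $\mathsf{T}_{\Phi,\pi}$ and bilinearity of the pairing, then invoking the definition of the 2D-Fock transform together with \eqref{eq-4-6}:
$$
\big\langle\!\big\langle \mathsf{T}_{\Phi,\pi}\overline{\xi}_n,\xi_n\big\rangle\!\big\rangle
=\sum_{\sigma,\tau\in\Gamma_{\!n}}\overline{\langle Z_\sigma,\xi\rangle}\langle Z_\tau,\xi\rangle\,\widehat{\mathsf{T}_{\Phi,\pi}}(\sigma,\tau)
=\big\langle\!\big\langle\Phi,\eta_n\big\rangle\!\big\rangle,
$$
where $\eta_n:=\sum_{\sigma,\tau\in\Gamma_{\!n}}\overline{\langle Z_\sigma,\xi\rangle}\langle Z_\tau,\xi\rangle\,\phi^\pi_{\sigma,\tau}$ is a finite linear combination of numerical densities and therefore lies in $\mathcal{S}$. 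Proposition~\ref{prop-4-2} applied to $\xi\in\mathcal{L}^2$ gives $\eta_n\geq 0$ $\mu$-a.e.; the hypothesis $\Phi\geq 0$ then forces $\langle\!\langle\Phi,\eta_n\rangle\!\rangle\geq 0$, so $\langle\!\langle \mathsf{T}_{\Phi,\pi}\overline{\xi}_n,\xi_n\rangle\!\rangle\geq 0$ for every $n$.

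The last step, which I expect to be the only technically delicate one, is the passage $n\to\infty$. Writing
$$
\big\langle\!\big\langle\mathsf{T}_{\Phi,\pi}\overline{\xi}_n,\xi_n\big\rangle\!\big\rangle
-\big\langle\!\big\langle\mathsf{T}_{\Phi,\pi}\overline{\xi},\xi\big\rangle\!\big\rangle
=\big\langle\!\big\langle\mathsf{T}_{\Phi,\pi}(\overline{\xi}_n-\overline{\xi}),\xi_n\big\rangle\!\big\rangle
+\big\langle\!\big\langle\mathsf{T}_{\Phi,\pi}\overline{\xi},\xi_n-\xi\big\rangle\!\big\rangle,
$$
the second summand tends to zero because $\mathsf{T}_{\Phi,\pi}\overline{\xi}\in\mathcal{S}^*$ is fixed and $\xi_n\to\xi$ in $\mathcal{S}$. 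For the first, Theorem~\ref{thr-3-2} furnishes some $q>0$ and a bounded extension of $\mathsf{T}_{\Phi,\pi}$ in $\mathfrak{L}(\mathcal{S}_q,\mathcal{S}_q^*)$, so $\|\mathsf{T}_{\Phi,\pi}(\overline{\xi}_n-\overline{\xi})\|_{-q}\to 0$ while $\{\|\xi_n\|_q\}$ is bounded; the Cauchy--Schwarz-type bound $|\langle\!\langle\Psi,\eta\rangle\!\rangle|\leq\|\Psi\|_{-q}\|\eta\|_q$ then drives the first summand to zero as well. Combining this with the stagewise nonnegativity delivers \eqref{eq-4-8}. I emphasize that the use of $\overline{\xi}$ rather than $\xi$ in the left slot is essential and natural: the pairing $\langle\!\langle\cdot,\cdot\rangle\!\rangle$ is bilinear (not sesquilinear), and it is exactly under this conjugation that the quadratic form produced by the partial sums matches the one whose positivity is guaranteed by Proposition~\ref{prop-4-2}.
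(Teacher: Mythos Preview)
Your proposal is correct and follows essentially the same approach as the paper: truncate $\xi$ (and $\overline{\xi}$) to finite partial sums over $\Gamma_{\!n}$, expand the pairing via the 2D-Fock transform and \eqref{eq-4-6} to reduce to $\langle\!\langle\Phi,\eta_n\rangle\!\rangle$, invoke Proposition~\ref{prop-4-2} together with $\Phi\geq 0$ for stagewise nonnegativity, and pass to the limit. The only differences are expository: you justify $\overline{\xi}\in\mathcal{S}$ explicitly and handle the limit via the splitting argument and Theorem~\ref{thr-3-2}, whereas the paper simply appeals to the continuity of $\int_\Sigma\Phi\,d\pi\colon\mathcal{S}\to\mathcal{S}^*$ and of the pairing to obtain $\langle\!\langle(\int_\Sigma\Phi\,d\pi)\overline{\xi_n},\xi_n\rangle\!\rangle\to\langle\!\langle(\int_\Sigma\Phi\,d\pi)\overline{\xi},\xi\rangle\!\rangle$ directly.
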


\begin{proof}
Let $\xi \in \mathcal{S}$ be given. For each positive integer $n\geq 1$, we set $\xi_n = \sum_{\sigma\in \Gamma\!_n}\langle Z_{\sigma}, \xi\rangle Z_{\sigma}$.
Then it is easy to see that
\begin{equation*}
\overline{\xi_n} = \sum_{\sigma\in \Gamma\!_n}\langle Z_{\sigma}, \overline{\xi}\rangle Z_{\sigma}.
\end{equation*}
By applying Lemma~\ref{lem-2-5}, we know that $\xi_n$ converges to $\xi$ in the topology of $\mathcal{S}$ as $n\rightarrow \infty$.
Similarly, $\overline{\xi_n}$ converges to $\overline{\xi}$ in the topology of $\mathcal{S}$. Thus
\begin{equation*}
  \big\langle\!\big\langle \big(\!\int_{\Sigma} \Phi d\pi\big)\overline{\xi}, \xi\big\rangle\!\big\rangle
  = \lim_{n\to \infty} \big\langle\!\big\langle \big(\!\int_{\Sigma} \Phi d\pi\big)\overline{\xi_n}, \xi_n\big\rangle\!\big\rangle.
\end{equation*}
For each $n\geq 1$, $\sum_{\sigma,\tau\in \Gamma\!_n}\overline{\langle Z_{\sigma}, \xi\rangle}\langle Z_{\tau}, \xi\rangle
\phi_{\sigma,\tau}^{\pi}$ obviously belongs to $\mathcal{S}$, and moreover, by Proposition~\ref{prop-4-2}, we further know that
\begin{equation*}
  \sum_{\sigma,\tau\in \Gamma\!_n}\overline{\langle Z_{\sigma}, \xi\rangle}\langle Z_{\tau}, \xi\rangle
      \phi_{\sigma,\tau}^{\pi} \geq 0,
\end{equation*}
which, together with the assumption $\Phi\geq 0$, gives
\begin{equation*}
  \big\langle\!\big\langle\Phi,\sum_{\sigma,\tau\in \Gamma\!_n}\overline{\langle Z_{\sigma}, \xi\rangle}\langle Z_{\tau}, \xi\rangle
      \phi_{\sigma,\tau}^{\pi}\big\rangle\!\big\rangle
  \geq 0.
\end{equation*}
On the other hand, for each $n\geq 1$, by a careful examination we find
\begin{equation*}
\begin{split}
\big\langle\!\big\langle \big(\!\int_{\Sigma} \Phi d\pi\big)\overline{\xi_n}, \xi_n\big\rangle\!\big\rangle
  &= \sum_{\sigma,\tau\in \Gamma\!_n}\overline{\langle Z_{\sigma}, \xi\rangle}\langle Z_{\tau}, \xi\rangle
   \widehat{\int_{\Sigma}\Phi d\pi}(\sigma, \tau)\\
  &= \sum_{\sigma,\tau\in \Gamma\!_n}\overline{\langle Z_{\sigma}, \xi\rangle}\langle Z_{\tau}, \xi\rangle
     \big\langle\!\big\langle\Phi, \phi_{\sigma,\tau}^{\pi}\big\rangle\!\big\rangle\\
  & = \big\langle\!\big\langle\Phi,\sum_{\sigma,\tau\in \Gamma\!_n}\overline{\langle Z_{\sigma}, \xi\rangle}\langle Z_{\tau}, \xi\rangle
      \phi_{\sigma,\tau}^{\pi}\big\rangle\!\big\rangle.
\end{split}
\end{equation*}
Thus $\big\langle\!\big\langle \big(\!\int_{\Sigma} \Phi d\pi\big)\overline{\xi_n}, \xi_n\big\rangle\!\big\rangle\geq 0$
for all $n\geq 0$, which directly leads to the desired result as follows
\begin{equation*}
 \big\langle\!\big\langle \big(\!\int_{\Sigma} \Phi d\pi\big)\overline{\xi}, \xi\big\rangle\!\big\rangle
  = \lim_{n\to \infty} \big\langle\!\big\langle \big(\!\int_{\Sigma} \Phi d\pi\big)\overline{\xi_n}, \xi_n\big\rangle\!\big\rangle
  \geq 0.
\end{equation*}
This completes the proof.
\end{proof}

A family $\{\Phi_{\alpha} \mid \alpha\in \Lambda\}$ of Bernoulli generalized functionals is said to be uniformly integrable
with respect to $\pi$ if there exist constants $C\geq 0$ and $p\geq 0$ such that
\begin{equation*}
 \sup_{\alpha \in \Lambda} \big|\big\langle\!\big\langle \Phi_{\alpha}, \phi_{\sigma,\tau}^{\pi}\big\rangle\!\big\rangle\big|\leq C\lambda_{\sigma}^p\lambda_{\tau}^p,\quad
  \forall\, (\sigma,\tau)\in \Gamma\times \Gamma.
\end{equation*}
The next result establishes a convergence theorem for spectral integrals of Bernoulli generalized functionals.

\begin{theorem}\label{thr-4-5}
Let $(\Phi_n)_{n\geq 1}\subset \mathcal{S}^*$ be a sequence of Bernoulli generalized functionals. Suppose that the following conditions are satisfied:
\begin{enumerate}
  \item[(1)] $\Phi_n$ converges weakly to $\Phi_0\in \mathcal{S}^*$ as $n\rightarrow \infty$, namely
  $\lim_{n\to \infty}\langle\!\langle\Phi_n, \xi\rangle\!\rangle = \langle\!\langle\Phi_0, \xi\rangle\!\rangle $ for all $\xi\in \mathcal{S}$;
  \item[(2)] $(\Phi_n)_{n\geq 1}$ is uniformly integrable with respect to $\pi$.
\end{enumerate}
Then $\Phi_0$ is also integrable with respect to $\pi$. Moreover, for all $\xi\in \mathcal{S}$,
$(\int_{\Sigma}\Phi_nd\pi)\xi$ converges strongly to $(\int_{\Sigma}\Phi_0d\pi)\xi$ as $n\rightarrow \infty$.
\end{theorem}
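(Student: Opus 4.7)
The plan is to derive the integrability of $\Phi_0$ directly from the uniform bound, promote uniform integrability to a uniform operator-norm bound via Theorem~\ref{thr-3-2}, and then prove norm convergence $(\int_\Sigma \Phi_n d\pi)\xi \to (\int_\Sigma \Phi_0 d\pi)\xi$ in a sufficiently coarse space $\mathcal{S}_{q'}^*$; by Lemma~\ref{lem-2-4} this will yield strong convergence in $\mathcal{S}^*$. For the first step, I select the constants $C\geq 0$, $p\geq 0$ furnished by condition (2), so that $|\langle\!\langle \Phi_n, \phi_{\sigma,\tau}^{\pi}\rangle\!\rangle|\leq C\lambda_\sigma^p\lambda_\tau^p$ for every $n\geq 1$ and every $(\sigma,\tau)\in \Gamma\times\Gamma$, and use the weak convergence in condition (1) against the testing functional $\phi_{\sigma,\tau}^{\pi}\in \mathcal{S}$ to pass the bound to the limit, which forces $\Phi_0$ to be integrable with the same constants. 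Writing $\mathsf{T}_n = \int_\Sigma \Phi_n d\pi$ for $n\geq 0$ and fixing $q>p+\tfrac{1}{2}$, Theorem~\ref{thr-3-2} then produces extensions $\widetilde{\mathsf{T}_n}\in \mathfrak{L}(\mathcal{S}_q,\mathcal{S}_q^*)$ with $\|\widetilde{\mathsf{T}_n}\|_{\mathfrak{L}(\mathcal{S}_q,\mathcal{S}_q^*)}\leq M$ uniformly in $n$, where $M = C\sum_{\sigma\in \Gamma}\lambda_\sigma^{-2(q-p)}$.

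Next I fix $\xi\in \mathcal{S}$ and pick $q'>q+\tfrac{1}{2}$. By Lemma~\ref{lem-2-6},
\[
\|\mathsf{T}_n\xi - \mathsf{T}_0\xi\|_{-q'}^2 = \sum_{\tau\in \Gamma}\lambda_\tau^{-2q'}\bigl|\langle\!\langle (\mathsf{T}_n-\mathsf{T}_0)\xi, Z_\tau\rangle\!\rangle\bigr|^2.
\]
The uniform operator bound together with $\|Z_\tau\|_q = \lambda_\tau^q$ (Lemma~\ref{lem-2-2}) dominates each summand by $4M^2\|\xi\|_q^2 \lambda_\tau^{-2(q'-q)}$, which is summable by Lemma~\ref{lem-2-1} since $2(q'-q)>1$. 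For termwise convergence, I expand $\xi=\sum_{\sigma\in\Gamma}\langle Z_\sigma,\xi\rangle Z_\sigma$ via Lemma~\ref{lem-2-5} and argue as in the proof of Theorem~\ref{thr-3-2} that
\[
\langle\!\langle \mathsf{T}_n\xi, Z_\tau\rangle\!\rangle = \sum_{\sigma\in \Gamma}\langle Z_\sigma,\xi\rangle \langle\!\langle \Phi_n,\phi_{\sigma,\tau}^{\pi}\rangle\!\rangle
\]
for each $n\geq 0$. An inner dominated-convergence argument with majorant $C\lambda_\tau^p|\langle Z_\sigma,\xi\rangle|\lambda_\sigma^p$---summable in $\sigma$ by Cauchy--Schwarz using $\xi\in \mathcal{S}$ together with $\sum_\sigma \lambda_\sigma^{-2}<\infty$ from Lemma~\ref{lem-2-1}---then lets me send $n\to\infty$ under the $\sigma$-sum and conclude $\langle\!\langle \mathsf{T}_n\xi, Z_\tau\rangle\!\rangle \to \langle\!\langle \mathsf{T}_0\xi, Z_\tau\rangle\!\rangle$ for each fixed $\tau$.

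The main technical obstacle is producing $n$-independent majorants in both summation variables before dominated convergence can be applied. The trick is a double enlargement of the regularity index: $q>p+\tfrac{1}{2}$ activates Theorem~\ref{thr-3-2} and makes the inner $\sigma$-bound uniform in $n$, whereas $q'>q+\tfrac{1}{2}$ renders the outer $\tau$-series $\sum_\tau \lambda_\tau^{-2(q'-q)}$ convergent. Once both applications of dominated convergence are in force, I conclude that $\|\mathsf{T}_n\xi - \mathsf{T}_0\xi\|_{-q'}\to 0$, and the continuous embedding $\mathcal{S}_{q'}^* \hookrightarrow \mathcal{S}^*$ guaranteed by Lemma~\ref{lem-2-4} delivers the claimed strong convergence.
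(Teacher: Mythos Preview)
Your proof is correct, but the route differs from the paper's. The paper uses only a single enlargement $q>p+\tfrac12$: it first proves $\|\mathsf{T}_nZ_\sigma-\mathsf{T}_0Z_\sigma\|_{-q}\to 0$ for each basis vector $Z_\sigma$ (where the ``inner'' $\sigma$-expansion collapses to one term, so only a single dominated convergence in $\tau$ is needed, with majorant $4C^2\lambda_\sigma^{2p}\lambda_\tau^{-2(q-p)}$), and then invokes the totality of $\{Z_\sigma\}$ in $\mathcal{S}_q$ together with the uniform bound $\sup_n\|\mathsf{T}_n\|_{\mathfrak{L}(\mathcal{S}_q,\mathcal{S}_q^*)}\leq M$ to extend strong convergence to every $\xi\in\mathcal{S}_q$. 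You instead attack a general $\xi\in\mathcal{S}$ directly, which forces a nested dominated-convergence argument (first in $\sigma$, then in $\tau$) and a second enlargement $q'>q+\tfrac12$ because your outer majorant comes from the operator norm and only yields $\lambda_\tau^{-2(q'-q)}$. The paper's reduction to basis vectors is slightly cleaner and stays in a single dual norm; your approach is more hands-on and bypasses the density/$\varepsilon$-$3$ step, at the harmless cost of landing in the coarser space $\mathcal{S}_{q'}^*$. Incidentally, your second enlargement could be avoided: bounding $|\langle\!\langle(\mathsf{T}_n-\mathsf{T}_0)\xi,Z_\tau\rangle\!\rangle|$ via the $\sigma$-expansion (rather than the operator norm) already gives a $\tau$-majorant of order $\lambda_\tau^{-2(q-p)}$, so the original $q$ would suffice.
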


\begin{proof}
By the uniform integrability of $(\Phi_n)_{n\geq 1}$, there exist constants $C\geq 0$ and $p\geq 0$ such that
\begin{equation}\label{eq-4-9}
\sup_{n\geq 1} \big|\big\langle\!\big\langle \Phi_n, \phi_{\sigma,\tau}^{\pi}\big\rangle\!\big\rangle\big|\leq C\lambda_{\sigma}^p\lambda_{\tau}^p,\quad
  \forall\, (\sigma,\tau)\in \Gamma\times \Gamma.
\end{equation}
On the other hand, since $\Phi_n$ converges weakly to $\Phi_0$ as $n\rightarrow \infty$,
we have
\begin{equation}\label{eq-4-10}
\lim_{n\to \infty}\big\langle\!\big\langle \Phi_n, \phi_{\sigma,\tau}^{\pi}\big\rangle\!\big\rangle
= \big\langle\!\big\langle \Phi_0, \phi_{\sigma,\tau}^{\pi}\big\rangle\!\big\rangle
\end{equation}
for all $(\sigma,\tau)\in \Gamma\times \Gamma$. Thus
$\big|\big\langle\!\big\langle \Phi_0, \phi_{\sigma,\tau}^{\pi}\big\rangle\!\big\rangle\big|\leq C\lambda_{\sigma}^p\lambda_{\tau}^p$,
$\forall\, (\sigma,\tau)\in \Gamma\times \Gamma$, which implies that $\Phi_0$ is integrable with respect to $\pi$.
Now consider the sequence $\int_{\Sigma}\Phi_n d\pi$, $n\geq 1$. Clearly, we have
\begin{equation*}
  \sup_{n\geq 1}\Big|\widehat{\int_{\Sigma}\Phi_n d\pi}(\sigma,\tau)\Big|
   = \sup_{n\geq 1}\big|\big\langle\!\big\langle \Phi_n, \phi_{\sigma,\tau}^{\pi}\big\rangle\!\big\rangle\big|
  \leq  C\lambda_{\sigma}^p\lambda_{\tau}^p,\quad \forall\, (\sigma,\tau)\in \Gamma\times \Gamma.
\end{equation*}
Take $q>p+\frac{1}{2}$. Then, by Theorem~\ref{thr-3-2}, there exists a sequence
$\mathsf{T}_n\in \mathfrak{L}(\mathcal{S}_q,\mathcal{S}_q^*)$, $n\geq 1$, such that
\begin{equation}\label{eq-4-11}
  \mathsf{T}_n\xi = \Big(\int_{\Sigma}\Phi_n d\pi\Big)\xi,\quad \forall\,\xi\in \mathcal{S},\, n\geq 1
\end{equation}
and
\begin{equation}\label{eq-4-12}
\sup_{n\geq 1}\|\mathsf{T}_n\|_{\mathfrak{L}(\mathcal{S}_q,\mathcal{S}_q^*)}\leq C\sum_{\sigma\in \Gamma}\lambda_{\sigma}^{-2(q-p)}.
\end{equation}

Next we show that $\mathsf{T}_n\xi\rightarrow \mathsf{T}_0\xi$ in the norm $\|\cdot\|_{-q}$ of $\mathcal{S}_q^*$ for each $\xi \in \mathcal{S}_q$.
However, in view of (\ref{eq-4-12}) and the fact that $\{Z_{\sigma} \mid \sigma\in \Gamma\}$ is total in $\mathcal{S}_q$,
it suffices to prove that $\mathsf{T}_nZ_{\sigma}\rightarrow \mathsf{T}_0Z_{\sigma}$ in the norm $\|\cdot\|_{-q}$ of $\mathcal{S}_q^*$ for each $\sigma \in \Gamma$.
Let $\sigma \in \Gamma$ be given, then by Lemma~\ref{lem-2-6} we have
\begin{equation*}
  \|\mathsf{T}_nZ_{\sigma}-\mathsf{T}_0Z_{\sigma}\|_{-q}^2
  = \sum_{\tau\in \Gamma}\lambda_{\tau}^{-2q}|\langle\!\langle \mathsf{T}_nZ_{\sigma}-\mathsf{T}_0Z_{\sigma}, Z_{\tau}\rangle\!\rangle|^2,\quad n\geq 1.
\end{equation*}
For each $\tau\in \Gamma$, it follows from (\ref{eq-4-11}) and (\ref{eq-4-10}) that
\begin{equation*}
 \lim_{n\to \infty} \lambda_{\tau}^{-2q}|\langle\!\langle \mathsf{T}_nZ_{\sigma}-\mathsf{T}_0Z_{\sigma}, Z_{\tau}\rangle\!\rangle|^2
  = \lim_{n\to \infty}\lambda_{\tau}^{-2q}|\langle\!\langle \Phi_n, \phi_{\sigma, \tau}^{\pi}\rangle\!\rangle
    - \langle\!\langle \Phi_0, \phi_{\sigma, \tau}^{\pi}\rangle\!\rangle|^2
  = 0.
\end{equation*}
On the other hand, we note that $\sum_{\tau\in \Gamma}4C^2\lambda_{\sigma}^{2p}\lambda_{\tau}^{-2(q-p)}
 = 4C^2\lambda_{\sigma}^{2p}\sum_{\tau\in \Gamma}\lambda_{\tau}^{-2(q-p)} <\infty$, and by (\ref{eq-4-9}) we have
\begin{equation*}
  \sup_{n\geq 1}\lambda_{\tau}^{-2q}|\langle\!\langle \mathsf{T}_nZ_{\sigma}-\mathsf{T}_0Z_{\sigma}, Z_{\tau}\rangle\!\rangle|^2
  = \sup_{n\geq 1}\lambda_{\tau}^{-2q}|\langle\!\langle \Phi_n, \phi_{\sigma, \tau}^{\pi}\rangle\!\rangle
    - \langle\!\langle \Phi_0, \phi_{\sigma, \tau}^{\pi}\rangle\!\rangle|^2
  \leq 4C^2\lambda_{\sigma}^{2p}\lambda_{\tau}^{-2(q-p)},\ \ \tau\in \Gamma.
\end{equation*}
Thus, by the dominated convergence theorem, we come to
 \begin{equation*}
  \lim_{n\to \infty} \|\mathsf{T}_nZ_{\sigma}-\mathsf{T}_0Z_{\sigma}\|_{-q}^2
  = \lim_{n\to \infty}\sum_{\tau\in \Gamma}\lambda_{\tau}^{-2q}|\langle\!\langle \mathsf{T}_nZ_{\sigma}-\mathsf{T}_0Z_{\sigma}, Z_{\tau}\rangle\!\rangle|^2
  =0,
\end{equation*}
which implies that $\mathsf{T}_nZ_{\sigma}\rightarrow \mathsf{T}_0Z_{\sigma}$ in the norm $\|\cdot\|_{-q}$ of $\mathcal{S}_q^*$.

Finally, for any $\xi \in \mathcal{S}$, in view of (\ref{eq-4-11}), we have
\begin{equation*}
  \lim_{n\to \infty}\Big(\int_{\Sigma}\Phi_n d\pi\Big)\xi = \lim_{n\to \infty}\mathsf{T}_n\xi = \mathsf{T}_0\xi = \Big(\int_{\Sigma}\Phi_0 d\pi\Big)\xi
\end{equation*}
in the norm $\|\cdot\|_{-q}$, which implies that $\Big(\int_{\Sigma}\Phi_n d\pi\Big)\xi$ converges to $\Big(\int_{\Sigma}\Phi_0 d\pi\Big)\xi$
in the strong topology of $\mathcal{S}^*$ as $n\rightarrow \infty$.
\end{proof}

\section{Example and further results}\label{sec-5}

In the final section, we show an example of an $\mathcal{S}$-smooth spectral measure and Bernoulli generalized functionals that are integrable
with respect to this spectral measure. Some further results are also obtained.

Throughout this section, we further assume that the Bernoulli space $(\Sigma, \mathscr{A},\mu)$ is symmetric,
namely the sequence $(\theta_n)_{n\geq 0}$ that defines the measure $\mu$ (see (\ref{eq-2-2}) in Section~\ref{sec-2})
satisfies the following requirements
\begin{equation*}
  \theta_n=\frac{1}{2},\quad \forall\, n\geq 0.
\end{equation*}
In this case, one has $Z_n =\zeta_n$, $n\geq 0$, which implies that $Z_{\sigma}^2=1$ for all $\sigma\in \Gamma$.
For details about $Z_n$ and $\zeta_n$, see (\ref{eq-2-3}) and (\ref{eq-2-1}) in Section~\ref{sec-2}.

As in previous sections, $\mathfrak{L}(\mathcal{S},\mathcal{S}^*)$ denotes the space of all continuous linear operators from $\mathcal{S}$ to $\mathcal{S}^*$,
and, for $p\geq 0$, $\mathfrak{L}(\mathcal{S}_p,\mathcal{S}_p^*)$ denotes the Banach space of all bounded linear operators
from $\mathcal{S}_p$ to $\mathcal{S}_p^*$. Note that a linear operator $\mathsf{T}\colon \mathcal{S}_p\rightarrow \mathcal{S}_p^*$
is bounded if and only if it is continuous.

For each $E\in \mathscr{A}$, by putting $\pi_0(E)\xi = \mathbf{1}_{E}\xi$, $\xi\in \mathcal{L}^2$,
we get a projection operator $\pi_0(E)$ on $\mathcal{L}^2$, where $\mathbf{1}_{E}$ denotes the indicator of $E$
and $\mathbf{1}_{E}\xi$ means the usual product of functions $\mathbf{1}_{E}$ and $\xi$ on $\Sigma$.
It can be shown that the mapping $E\mapsto \pi_0(E)$ defines a spectral measure $\pi_0$
on the Bernoulli space $(\Sigma, \mathscr{A},\mu)$,
which we call \textbf{the canonical spectral measure} on $(\Sigma, \mathscr{A},\mu)$.

\begin{theorem}\label{thr-5-1}
The canonical spectral measure $\pi_0$ is $\mathcal{S}$-smooth and its numerical spectral density $\phi_{\sigma,\tau}^{\pi_0}$
associated with $(\sigma,\tau)\in \Gamma\times \Gamma$ takes the following form
\begin{equation}\label{eq-5-1}
  \phi_{\sigma,\tau}^{\pi_0}= Z_{\sigma\bigtriangleup \tau},
\end{equation}
where $\sigma\bigtriangleup \tau = (\sigma\setminus \tau)\cup (\tau\setminus \sigma)$ and
$Z_{\sigma\bigtriangleup \tau}$ is the corresponding basis vector of the canonical orthonormal basis for $\mathcal{L}^2$ (see (\ref{eq-2-6}) for details).
\end{theorem}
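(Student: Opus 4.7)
The plan is to verify the defining identity in Definition~\ref{def-4-2} directly by a short computation, using that the symmetry assumption forces $Z_n^2 = 1$ for all $n$, and then to read off the density.

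First I would fix $\sigma$, $\tau \in \Gamma$ and an arbitrary $E \in \mathscr{A}$ and unwind the left-hand side of (\ref{eq-4-3}) using the definition of $\pi_0$. Since $\pi_0(E)\xi = \mathbf{1}_E \xi$ and $Z_\sigma$ is real-valued (each factor $Z_n = \zeta_n$ takes values in $\{-1,1\}$ by the symmetric assumption), the inner product becomes
\begin{equation*}
\langle \pi_0(E) Z_\sigma, Z_\tau \rangle
= \int_\Sigma \mathbf{1}_E\, \overline{Z_\sigma}\, Z_\tau\, d\mu
= \int_E Z_\sigma Z_\tau\, d\mu.
\end{equation*}

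Next I would reduce the product $Z_\sigma Z_\tau$ to a single basis vector. Writing the product out index by index, the indices in $\sigma \cap \tau$ contribute factors $Z_n^2 = 1$, while the remaining indices are precisely those in $\sigma \triangle \tau$. This yields the key identity
\begin{equation*}
Z_\sigma Z_\tau = Z_{\sigma \triangle \tau},
\end{equation*}
which is the only computational step that really uses the symmetry hypothesis; this is the substantive point of the argument, though it is genuinely a one-line check rather than a deep obstacle. Combining this with the previous display gives
\begin{equation*}
\langle \pi_0(E) Z_\sigma, Z_\tau \rangle = \int_E Z_{\sigma \triangle \tau}\, d\mu, \quad \forall\, E \in \mathscr{A}.
\end{equation*}

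Finally, Lemma~\ref{lem-2-5} tells us that every basis element $Z_{\sigma \triangle \tau}$ belongs to $\mathcal{S}$, so $Z_{\sigma \triangle \tau}$ qualifies as a Bernoulli testing functional. Taking $\phi_{\sigma,\tau}^{\pi_0} := Z_{\sigma \triangle \tau}$, the above identity is precisely the defining relation (\ref{eq-4-3}) for the $\mathcal{S}$-smoothness of $\pi_0$, and the uniqueness of the density (as an element of $\mathcal{L}^2$, hence of $\mathcal{S}$) yields (\ref{eq-5-1}). No nontrivial obstacle is expected; the whole content of the theorem is that the symmetry assumption converts the product of two basis elements into a single basis element.
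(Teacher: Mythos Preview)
Your proposal is correct and follows essentially the same route as the paper: both arguments compute $\langle \pi_0(E)Z_\sigma, Z_\tau\rangle = \int_E Z_\sigma Z_\tau\,d\mu$, use the symmetry hypothesis to reduce $Z_\sigma Z_\tau = Z_{\sigma\triangle\tau}$, and invoke Lemma~\ref{lem-2-5} to place $Z_{\sigma\triangle\tau}$ in $\mathcal{S}$. The only cosmetic difference is that you make the real-valuedness of $Z_\sigma$ (hence $\overline{Z_\sigma}=Z_\sigma$) explicit, which the paper leaves implicit.
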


\begin{proof}
Take $(\sigma,\tau)\in \Gamma$. Then, $\sigma\bigtriangleup \tau\in \Gamma$, which together with Lemma~\ref{lem-2-5}
implies that $Z_{\sigma\bigtriangleup \tau}\in \mathcal{S}$.
On the other hand, by (\ref{eq-2-6}) and the property that $Z_{\gamma}^2=1$ for $\gamma\in \Gamma$, we have
\begin{equation*}
Z_{\sigma}Z_{\tau}=\Big(\prod_{k\in \sigma\setminus \tau}Z_k\Big)\Big(\prod_{k\in \sigma\cap \tau}Z_k\Big)^2\Big(\prod_{k\in \tau\setminus \sigma}Z_k\Big)
= Z_{\sigma\bigtriangleup \tau}Z_{\sigma\cap \tau}^2
= Z_{\sigma\bigtriangleup \tau},
\end{equation*}
which together with the definition of $\pi_0$ gives
\begin{equation*}
 \langle \pi_0(E)Z_{\sigma}, Z_{\tau}\rangle
 = \int_{\Sigma} \mathbf{1}_{E}Z_{\sigma}Z_{\tau}d\mu
 = \int_E Z_{\sigma}Z_{\tau}d\mu
 = \int_E Z_{\sigma\bigtriangleup \tau}d\mu,\quad \forall\, E\in \mathscr{A}.
\end{equation*}
Therefore, $\pi_0$ is $\mathcal{S}$-smooth and its numerical spectral density $\phi_{\sigma,\tau}^{\pi_0}$
associated with $(\sigma,\tau)\in \Gamma\times \Gamma$ is exactly $Z_{\sigma\bigtriangleup \tau}$.
\end{proof}

\begin{theorem}\label{thr-5-2}
Every $\Phi\in \mathcal{S}^*$ is integrable with respect to the canonical spectral measure $\pi_0$.
\end{theorem}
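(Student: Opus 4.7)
The plan is to unpack the integrability condition of Definition 4.3 using the explicit form of the numerical density computed in Theorem 5.1, and then to apply the bound provided by Lemma 2.6 after a short combinatorial estimate on $\lambda_\sigma$.

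First I would fix $\Phi\in\mathcal{S}^*$ and use Lemma~\ref{lem-2-4} to choose $p\geq 0$ with $\Phi\in\mathcal{S}_p^*$. By Lemma~\ref{lem-2-6}, this gives
\begin{equation*}
  |\langle\!\langle \Phi, Z_\gamma\rangle\!\rangle|\leq \|\Phi\|_{-p}\,\lambda_\gamma^p,\qquad \gamma\in\Gamma,
\end{equation*}
since each term on the left of the identity $\|\Phi\|_{-p}^2=\sum_\gamma \lambda_\gamma^{-2p}|\langle\!\langle\Phi,Z_\gamma\rangle\!\rangle|^2$ is bounded by the full sum. By Theorem~\ref{thr-5-1}, the numerical spectral density of $\pi_0$ is $\phi_{\sigma,\tau}^{\pi_0}=Z_{\sigma\bigtriangleup\tau}$, so applying the above bound with $\gamma=\sigma\bigtriangleup\tau$ yields
\begin{equation*}
  \big|\big\langle\!\big\langle \Phi,\phi_{\sigma,\tau}^{\pi_0}\big\rangle\!\big\rangle\big|
  \leq \|\Phi\|_{-p}\,\lambda_{\sigma\bigtriangleup\tau}^p.
\end{equation*}

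The only remaining point is to bound $\lambda_{\sigma\bigtriangleup\tau}$ by $\lambda_\sigma\lambda_\tau$. Since $\sigma\bigtriangleup\tau\subset\sigma\cup\tau$ and each factor $1+k\geq 1$, the definition (\ref{eq-2-7}) gives
\begin{equation*}
  \lambda_{\sigma\bigtriangleup\tau}
  =\prod_{k\in\sigma\bigtriangleup\tau}(1+k)
  \leq \prod_{k\in\sigma\cup\tau}(1+k)
  \leq \prod_{k\in\sigma}(1+k)\prod_{k\in\tau}(1+k)
  =\lambda_\sigma\lambda_\tau,
\end{equation*}
where the last inequality uses that the factors indexed by $\sigma\cap\tau$ are only repeated, not missing. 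Raising to the $p$-th power then gives $\lambda_{\sigma\bigtriangleup\tau}^p\leq \lambda_\sigma^p\lambda_\tau^p$.

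Combining the two bounds, I obtain
\begin{equation*}
  \big|\big\langle\!\big\langle \Phi,\phi_{\sigma,\tau}^{\pi_0}\big\rangle\!\big\rangle\big|
  \leq \|\Phi\|_{-p}\,\lambda_\sigma^p\lambda_\tau^p,\qquad (\sigma,\tau)\in\Gamma\times\Gamma,
\end{equation*}
which is exactly the condition (\ref{eq-4-5}) with $C=\|\Phi\|_{-p}$, so $\Phi$ is integrable with respect to $\pi_0$. There is no real obstacle here: once Theorem~\ref{thr-5-1} identifies the density as a basis vector $Z_{\sigma\bigtriangleup\tau}$, the conclusion is essentially a repackaging of the defining estimate for elements of $\mathcal{S}_p^*$, and the only nontrivial ingredient is the elementary monotonicity inequality $\lambda_{\sigma\bigtriangleup\tau}\leq\lambda_\sigma\lambda_\tau$.
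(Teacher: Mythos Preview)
Your proof is correct and follows essentially the same route as the paper: pick $p$ with $\Phi\in\mathcal{S}_p^*$, use $\phi_{\sigma,\tau}^{\pi_0}=Z_{\sigma\bigtriangleup\tau}$ from Theorem~\ref{thr-5-1}, bound $|\langle\!\langle\Phi,Z_{\sigma\bigtriangleup\tau}\rangle\!\rangle|$ by $\|\Phi\|_{-p}\lambda_{\sigma\bigtriangleup\tau}^p$, and finish with $\lambda_{\sigma\bigtriangleup\tau}\leq\lambda_\sigma\lambda_\tau$. The only cosmetic difference is that the paper phrases the middle step as the duality bound $|\langle\!\langle\Phi,Z_{\sigma\bigtriangleup\tau}\rangle\!\rangle|\leq\|\Phi\|_{-p}\|Z_{\sigma\bigtriangleup\tau}\|_p$ with $\|Z_{\sigma\bigtriangleup\tau}\|_p=\lambda_{\sigma\bigtriangleup\tau}^p$, whereas you extract the same inequality from the series expression in Lemma~\ref{lem-2-6}.
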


\begin{proof}
Let $\Phi\in \mathcal{S}^*$ be given. Then, there is some $p\geq 0$ such that $\Phi\in \mathcal{S}_p^*$. For all $(\sigma,\tau)\in \Gamma\times \Gamma$,
we have
\begin{equation*}
  |\langle\!\langle \Phi, \phi_{\sigma,\tau}^{\pi_0}\rangle\!\rangle|
  = |\langle\!\langle \Phi, Z_{\sigma\bigtriangleup\tau}\rangle\!\rangle|
  \leq \|\Phi\|_{-p}\|Z_{\sigma\bigtriangleup\tau}\|_p,
\end{equation*}
which together with $\|Z_{\sigma\bigtriangleup\tau}\|_p= \lambda_{\sigma\bigtriangleup\tau}^p\leq \lambda_{\sigma}^p\lambda_{\tau}^p$ yields
\begin{equation*}
  |\langle\!\langle \Phi, \phi_{\sigma,\tau}^{\pi_0}\rangle\!\rangle|\leq \|\Phi\|_{-p}\lambda_{\sigma}^p\lambda_{\tau}^p.
\end{equation*}
Therefore, by definition, $\Phi$ is integrable with respect to $\pi_0$.
\end{proof}

\begin{remark}\label{rem-5-1}
Recall that $\mathcal{S}$ is dense in $\mathcal{S}_p$ for each $p\geq 0$. Thus,
if $\mathsf{T}\colon (\mathcal{S}, \|\cdot\|_p)\rightarrow \mathcal{S}_p^*$ is a bounded linear operator,
then there exists a unique $\widetilde{\mathsf{T}}\in \mathfrak{L}(\mathcal{S}_p,\mathcal{S}_p^*)$ such that
$\widetilde{\mathsf{T}}\xi = \mathsf{T}\xi$ for all $\xi\in \mathcal{S}$
and
\begin{equation*}
  \big\|\widetilde{\mathsf{T}}\big\|_{\mathfrak{L}(\mathcal{S}_p,\mathcal{S}_p^*)}
  = \sup\{\,\|\mathsf{T}\xi\|_{-p} \,\mid\, \xi\in S,\, \|\xi\|_p=1\,\}.
\end{equation*}
In that case, we identify $\mathsf{T}$ with $\widetilde{\mathsf{T}}$, namely $\mathsf{T}=\widetilde{\mathsf{T}}$.
\end{remark}

According to Theorem~\ref{thr-5-1}, the integration with the canonical spectral measure $\pi_0$ defines a linear
mapping $\Phi\mapsto \int_{\Sigma}\Phi d\pi_0$ from $\mathcal{S}^*$ to $\mathfrak{L}(\mathcal{S},\mathcal{S}^*)$.
The next theorem shows that this mapping is continuous.

\begin{theorem}\label{thr-5-3}
Let $p\geq 0$ and $\Phi\in \mathcal{S}^*_p$ be given. Then, for $q>p+\frac{1}{2}$, $\int_{\Sigma}\Phi d\pi_0 \in \mathfrak{L}(\mathcal{S}_q,\mathcal{S}_q^*)$ and
moreover
\begin{equation}\label{eq-5-2}
  \Big\|\int_{\Sigma}\Phi d\pi_0\Big\|_{\mathfrak{L}(\mathcal{S}_q,\mathcal{S}_q^*)}
  \leq \Big[\sum_{\sigma\in \Gamma}\lambda_{\sigma}^{-2(q-p)}\Big]\|\Phi\|_{-p}.
\end{equation}
\end{theorem}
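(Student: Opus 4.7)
The plan is to reduce the statement to the regularity result Theorem~\ref{thr-3-2} by combining the explicit form of the numerical spectral density of $\pi_0$ with the size estimate on $\|Z_{\sigma\bigtriangleup\tau}\|_p$ that was already used in the proof of Theorem~\ref{thr-5-2}. The strategy is essentially to repackage the proof of Theorem~\ref{thr-5-2} into a quantitative norm bound and then feed it into Theorem~\ref{thr-3-2}.

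Concretely, I would first recall from Theorem~\ref{thr-5-1} that $\phi_{\sigma,\tau}^{\pi_0}=Z_{\sigma\bigtriangleup\tau}$, so that by Definition~\ref{def-4-3},
\begin{equation*}
\widehat{\int_{\Sigma}\Phi d\pi_0}(\sigma,\tau)
= \langle\!\langle \Phi, \phi_{\sigma,\tau}^{\pi_0}\rangle\!\rangle
= \langle\!\langle \Phi, Z_{\sigma\bigtriangleup\tau}\rangle\!\rangle,\quad (\sigma,\tau)\in \Gamma\times \Gamma.
\end{equation*}
Next, since $\Phi\in \mathcal{S}_p^*$ and $Z_{\sigma\bigtriangleup\tau}\in \mathcal{S}\subset \mathcal{S}_p$, applying the duality bound and Lemma~\ref{lem-2-2} (which gives $\|Z_{\gamma}\|_p = \lambda_{\gamma}^p$ for $\gamma\in \Gamma$) together with the submultiplicativity $\lambda_{\sigma\bigtriangleup\tau}\leq \lambda_{\sigma}\lambda_{\tau}$ (already invoked in the proof of Theorem~\ref{thr-5-2}), I get
\begin{equation*}
\big|\widehat{\int_{\Sigma}\Phi d\pi_0}(\sigma,\tau)\big|
\leq \|\Phi\|_{-p}\,\|Z_{\sigma\bigtriangleup\tau}\|_p
= \|\Phi\|_{-p}\,\lambda_{\sigma\bigtriangleup\tau}^p
\leq \|\Phi\|_{-p}\,\lambda_{\sigma}^p\lambda_{\tau}^p.
\end{equation*}

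Finally, I would apply Theorem~\ref{thr-3-2} to the operator $\mathsf{T}=\int_{\Sigma}\Phi d\pi_0\in \mathfrak{L}(\mathcal{S},\mathcal{S}^*)$ with the constant $C=\|\Phi\|_{-p}$ and the same exponent $p$. Since $q>p+\tfrac{1}{2}$ by hypothesis, Theorem~\ref{thr-3-2} delivers a unique extension $\widetilde{\mathsf{T}}\in \mathfrak{L}(\mathcal{S}_q,\mathcal{S}_q^*)$ with
\begin{equation*}
\big\|\widetilde{\mathsf{T}}\big\|_{\mathfrak{L}(\mathcal{S}_q,\mathcal{S}_q^*)}
\leq \|\Phi\|_{-p}\sum_{\sigma\in \Gamma}\lambda_{\sigma}^{-2(q-p)}.
\end{equation*}
Identifying $\int_{\Sigma}\Phi d\pi_0$ with its norm-keeping extension $\widetilde{\mathsf{T}}$ as in Remark~\ref{rem-5-1} yields the conclusion.

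There is no genuine obstacle here beyond bookkeeping; the only point requiring care is the combination of Lemma~\ref{lem-2-6}, the identity $\|Z_{\gamma}\|_p=\lambda_{\gamma}^p$, and the inequality $\lambda_{\sigma\bigtriangleup\tau}\leq \lambda_{\sigma}\lambda_{\tau}$, so that the constant $C$ emerging from Theorem~\ref{thr-3-2} is exactly $\|\Phi\|_{-p}$ rather than something larger. Once those are assembled, the norm bound (\ref{eq-5-2}) follows directly from (\ref{eq-3-5}).
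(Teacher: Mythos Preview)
Your proposal is correct and follows essentially the same approach as the paper: the paper also extracts the bound $|\widehat{\mathsf{T}}(\sigma,\tau)|\leq \|\Phi\|_{-p}\lambda_\sigma^p\lambda_\tau^p$ from the proof of Theorem~\ref{thr-5-2}, then applies Theorem~\ref{thr-3-2} together with Remark~\ref{rem-5-1} to conclude. Your write-up just spells out that estimate explicitly rather than citing it.
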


\begin{proof}
Write $\mathsf{T}=\int_{\Sigma}\Phi d\pi_0$. Then, from the proof of Theorem~\ref{thr-5-2}, we find that
\begin{equation*}
  |\widehat{\mathsf{T}}(\sigma,\tau)|
   = |\langle\!\langle \Phi, \phi_{\sigma,\tau}^{\pi_0}\rangle\!\rangle|
 \leq \|\Phi\|_{-p}\lambda_{\sigma}^p\lambda_{\tau}^p, \quad \forall\, (\sigma,\tau)\in \Gamma\times \Gamma.
\end{equation*}
Consequently, by Theorem~\ref{thr-3-2} and Remark~\ref{rem-5-1}, we know that $\mathsf{T}\in \mathfrak{L}(\mathcal{S}_q,\mathcal{S}_q^*)$ and
\begin{equation*}
  \|\mathsf{T}\|_{\mathfrak{L}(\mathcal{S}_q,\mathcal{S}_q^*)}
  \leq \|\Phi\|_{-p}\Big[\sum_{\sigma\in \Gamma}\lambda_{\sigma}^{-2(q-p)}\Big]
  =\Big[\sum_{\sigma\in \Gamma}\lambda_{\sigma}^{-2(q-p)}\Big]\|\Phi\|_{-p}.
\end{equation*}
This completes the proof.
\end{proof}

For Bernoulli generalized functionals $\Phi$, $\Psi\in \mathcal{S}^*$, their convolution $\Phi\ast\Psi\in \mathcal{S}^*$ is defined by
\begin{equation}\label{eq-5-3}
  \widehat{\Phi\ast\Psi}(\sigma) = \widehat{\Phi}(\sigma) \widehat{\Psi}(\sigma),\quad \sigma\in \Gamma,
\end{equation}
where $\widehat{\Phi}$ is the Fock transform of $\Phi$, which is defined by $\widehat{\Phi}(\sigma)= \langle\!\langle \Phi, Z_{\sigma}\rangle\!\rangle$,
$\sigma\in \Gamma$.
Similarly, for operators $\mathsf{T}_1$, $\mathsf{T}_2\in \mathfrak{L}(\mathcal{S},\mathcal{S}^*)$,
their convolution $\mathsf{T}_1\ast \mathsf{T}_2\in \mathfrak{L}(\mathcal{S},\mathcal{S}^*)$ is determined by
\begin{equation}\label{eq-5-4}
  \widehat{\mathsf{T}_1\ast \mathsf{T}_2}(\sigma,\tau) = \widehat{\mathsf{T}_1}(\sigma,\tau) \widehat{\mathsf{T}_2}(\sigma,\tau),\quad
  (\sigma,\tau)\in \Gamma\times \Gamma.
\end{equation}
See \cite{wang-chen-1} and \cite{wang-chen-2} for details about convolutions of generalized functionals of discrete-time normal martingale,
which include Bernoulli generalized functionals as a special case,
and about convolutions of operators on these functionals, respectively.

\begin{theorem}
For all $\Phi$, $\Psi\in \mathcal{S}^*$, it holds true that
\begin{equation}\label{eq-5-5}
  \int_{\Sigma}\Phi\ast \Psi d\pi_0 = \Big(\int_{\Sigma}\Phi d\pi_0\Big)\ast \Big(\int_{\Sigma}\Phi d\pi_0\Big).
\end{equation}
\end{theorem}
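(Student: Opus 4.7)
The plan is to prove the identity by comparing the 2D-Fock transforms of both sides, since by the uniqueness remark following Definition~\ref{def-3-1}, two continuous linear operators from $\mathcal{S}$ to $\mathcal{S}^*$ coincide if and only if they have the same 2D-Fock transform. The whole argument reduces to an algebraic manipulation using the explicit formula $\phi_{\sigma,\tau}^{\pi_0}=Z_{\sigma\bigtriangleup\tau}$ from Theorem~\ref{thr-5-1}.

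First I would verify that all objects in \eqref{eq-5-5} are well defined. Since $\Phi,\Psi\in\mathcal{S}^*$, there exist $C_i\geq 0$ and $p_i\geq 0$ with $|\widehat{\Phi}(\sigma)|\leq C_1\lambda_\sigma^{p_1}$ and $|\widehat{\Psi}(\sigma)|\leq C_2\lambda_\sigma^{p_2}$; hence $|\widehat{\Phi\ast\Psi}(\sigma)|\leq C_1C_2\lambda_\sigma^{p_1+p_2}$, which guarantees $\Phi\ast\Psi\in\mathcal{S}^*$, and by Theorem~\ref{thr-5-2} it is integrable with respect to $\pi_0$. Similarly $\int_\Sigma\Phi\,d\pi_0$ and $\int_\Sigma\Psi\,d\pi_0$ lie in $\mathfrak{L}(\mathcal{S},\mathcal{S}^*)$, and by Lemma~\ref{lem-3-1} together with the bound $\lambda_{\sigma\bigtriangleup\tau}\leq \lambda_\sigma\lambda_\tau$ their pointwise product on $\Gamma\times\Gamma$ satisfies the growth condition required to be the 2D-Fock transform of an operator in $\mathfrak{L}(\mathcal{S},\mathcal{S}^*)$, so the operator convolution on the right-hand side is well defined by \eqref{eq-5-4}.

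Next I would compute each 2D-Fock transform explicitly. For the left-hand side, Definition~\ref{def-4-3} and Theorem~\ref{thr-5-1} give
\begin{equation*}
\widehat{\int_{\Sigma}\Phi\ast\Psi\,d\pi_0}(\sigma,\tau)
=\big\langle\!\big\langle \Phi\ast\Psi,\phi_{\sigma,\tau}^{\pi_0}\big\rangle\!\big\rangle
=\big\langle\!\big\langle \Phi\ast\Psi,Z_{\sigma\bigtriangleup\tau}\big\rangle\!\big\rangle
=\widehat{\Phi\ast\Psi}(\sigma\bigtriangleup\tau)
=\widehat{\Phi}(\sigma\bigtriangleup\tau)\widehat{\Psi}(\sigma\bigtriangleup\tau).
\end{equation*}
For the right-hand side, using \eqref{eq-5-4} and again Theorem~\ref{thr-5-1},
\begin{equation*}
\widehat{\Big(\!\int_{\Sigma}\Phi\,d\pi_0\Big)\ast\Big(\!\int_{\Sigma}\Psi\,d\pi_0\Big)}(\sigma,\tau)
=\big\langle\!\big\langle\Phi,Z_{\sigma\bigtriangleup\tau}\big\rangle\!\big\rangle\,\big\langle\!\big\langle\Psi,Z_{\sigma\bigtriangleup\tau}\big\rangle\!\big\rangle
=\widehat{\Phi}(\sigma\bigtriangleup\tau)\widehat{\Psi}(\sigma\bigtriangleup\tau).
\end{equation*}
The two 2D-Fock transforms coincide on $\Gamma\times\Gamma$, which yields \eqref{eq-5-5}.

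I do not foresee a genuine obstacle: the proof is essentially a matching of indices once the key identity $\phi_{\sigma,\tau}^{\pi_0}=Z_{\sigma\bigtriangleup\tau}$ is in hand, because the symmetric difference turns the bilinear pairing with the numerical density into the ordinary Fock transform. The only mildly delicate point is housekeeping, namely checking that the operator convolution in \eqref{eq-5-4} actually defines an element of $\mathfrak{L}(\mathcal{S},\mathcal{S}^*)$; this is where the inequality $\lambda_{\sigma\bigtriangleup\tau}\leq\lambda_\sigma\lambda_\tau$ together with Lemma~\ref{lem-3-1} does all the work.
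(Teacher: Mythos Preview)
Your proof is correct and follows essentially the same route as the paper: both sides are identified by computing their 2D-Fock transforms using $\phi_{\sigma,\tau}^{\pi_0}=Z_{\sigma\bigtriangleup\tau}$ and then invoking the injectivity of the 2D-Fock transform. Your additional well-definedness checks (that $\Phi\ast\Psi\in\mathcal{S}^*$ and that the operator convolution lies in $\mathfrak{L}(\mathcal{S},\mathcal{S}^*)$) are a welcome bit of housekeeping that the paper leaves implicit.
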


\begin{proof}
For all $(\sigma,\tau)\in \Gamma\times \Gamma$, in view of $\phi_{\sigma,\tau}^{\pi_0}=Z_{\sigma\bigtriangleup \tau}$, we have
\begin{equation*}
  \widehat{\int_{\Sigma}\Phi\ast \Psi d\pi_0}(\sigma,\tau)
   =  \langle\!\langle \Phi\ast \Psi, Z_{\sigma\bigtriangleup\tau}\rangle\!\rangle
  = \widehat{\Phi\ast \Psi}(\sigma\bigtriangleup\tau)
  = \widehat{\Phi}(\sigma\bigtriangleup\tau)\widehat{\Psi}(\sigma\bigtriangleup\tau)
  = \langle\!\langle \Phi,\sigma\bigtriangleup\tau\rangle\!\rangle \langle\!\langle \Psi,\sigma\bigtriangleup\tau\rangle\!\rangle
\end{equation*}
and
\begin{equation*}
  \widehat{\Big(\int_{\Sigma}\Phi d\pi_0\Big)\ast\Big(\int_{\Sigma}\Phi d\pi_0\Big)}(\sigma,\tau)
  = \widehat{\Big(\int_{\Sigma}\Phi d\pi_0\Big)}(\sigma,\tau) \widehat{\Big(\int_{\Sigma}\Phi d\pi_0\Big)}(\sigma,\tau)
  = \langle\!\langle \Phi,\sigma\bigtriangleup\tau\rangle\!\rangle \langle\!\langle \Psi,\sigma\bigtriangleup\tau\rangle\!\rangle,
\end{equation*}
which implies that $\int_{\Sigma}\Phi\ast \Psi d\pi_0=\big(\int_{\Sigma}\Phi d\pi_0\big)\ast\big(\int_{\Sigma}\Phi d\pi_0\big)$.
\end{proof}

For Bernoulli generalized functionals $\Phi$, $\Psi\in \mathcal{S}^*$, in the spirit of \cite{wang-zhang}, one can define the Wick product
$\Phi\diamond\Psi$, which belongs to $\mathcal{S}^*$ and satisfies
\begin{equation}\label{eq-5-6}
  \widehat{\Phi\diamond\Psi}(\sigma) = \sum_{\tau\subset \sigma}\widehat{\Phi}(\tau)\widehat{\Psi}(\sigma\setminus\tau),\quad \sigma \in \Gamma,
\end{equation}
where $\widehat{\Upsilon}$ denotes the Fock transform of a Bernoulli generalized functional $\Upsilon$,
and $\sum_{\tau\subset \sigma}$ means to sum for all subsets of $\sigma$. Comparing (\ref{eq-5-6}) and (\ref{eq-5-3}), one can see that
the Wick product $\Phi\diamond\Psi$ differs greatly from the convolution $\Phi\ast\Psi$.
The next proposition further shows that their spectral integrals can have quite different regularity.

\begin{proposition}
Let $\Phi$, $\Psi\in \mathcal{S}_p^*$ be Bernoulli generalized functionals with  $p\geq 0$. Then
\begin{equation}\label{eq-5-7}
\int_{\Sigma}\Phi\diamond\Psi d\pi_0\in \mathfrak{L}(\mathcal{S}_{p+2},\mathcal{S}_{p+2}^*),\quad
\mbox{while}\quad \int_{\Sigma}\Phi\ast\Psi d\pi_0 \in \mathfrak{L}(\mathcal{S}_{2p+1},\mathcal{S}_{2p+1}^*).
\end{equation}
\end{proposition}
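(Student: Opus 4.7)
The plan is to handle both statements by the same two-step recipe: first compute the 2D-Fock transform of the spectral integral by combining Definition~\ref{def-4-3} with Theorem~\ref{thr-5-1}, so that
\begin{equation*}
\widehat{\textstyle\int_{\Sigma}\Upsilon d\pi_0}(\sigma,\tau)=\langle\!\langle \Upsilon, Z_{\sigma\bigtriangleup\tau}\rangle\!\rangle=\widehat{\Upsilon}(\sigma\bigtriangleup\tau)
\end{equation*}
for any $\Upsilon\in\mathcal{S}^*$; then produce a bound of the shape $|\widehat{\mathsf{T}}(\sigma,\tau)|\le C\lambda_\sigma^{p'}\lambda_\tau^{p'}$ and invoke Theorem~\ref{thr-3-2}, which yields membership in $\mathfrak{L}(\mathcal{S}_q,\mathcal{S}_q^*)$ for every $q>p'+\tfrac12$. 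Two elementary ingredients will be used repeatedly: (i) by Lemma~\ref{lem-2-6}, $\Upsilon\in\mathcal{S}_p^*$ implies the pointwise bound $|\widehat{\Upsilon}(\sigma)|\le\|\Upsilon\|_{-p}\lambda_\sigma^p$; and (ii) the submultiplicativity $\lambda_{\sigma\bigtriangleup\tau}\le\lambda_\sigma\lambda_\tau$ (already used in the proof of Theorem~\ref{thr-5-2}), together with the multiplicative identity $\lambda_\alpha\lambda_{\gamma\setminus\alpha}=\lambda_\gamma$ whenever $\alpha\subset\gamma$.

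For the convolution case, from $\widehat{\Phi\ast\Psi}(\gamma)=\widehat{\Phi}(\gamma)\widehat{\Psi}(\gamma)$ and the pointwise bound in (i) I get
\begin{equation*}
\big|\widehat{\textstyle\int_{\Sigma}\Phi\ast\Psi d\pi_0}(\sigma,\tau)\big|
=|\widehat{\Phi}(\sigma\bigtriangleup\tau)||\widehat{\Psi}(\sigma\bigtriangleup\tau)|
\le \|\Phi\|_{-p}\|\Psi\|_{-p}\lambda_{\sigma\bigtriangleup\tau}^{2p}
\le \|\Phi\|_{-p}\|\Psi\|_{-p}\lambda_\sigma^{2p}\lambda_\tau^{2p}.
\end{equation*}
Theorem~\ref{thr-3-2} with $p'=2p$ then forces $\int_\Sigma \Phi\ast\Psi d\pi_0\in\mathfrak{L}(\mathcal{S}_q,\mathcal{S}_q^*)$ for every $q>2p+\tfrac12$; specialising to $q=2p+1$ gives the second half of the claim.

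For the Wick product case, the main obstacle is that $\widehat{\Phi\diamond\Psi}(\gamma)=\sum_{\alpha\subset\gamma}\widehat{\Phi}(\alpha)\widehat{\Psi}(\gamma\setminus\alpha)$ is a sum of $2^{|\gamma|}$ terms, so a naive pointwise estimate loses too much. The plan is to apply Cauchy--Schwarz with the weight $\lambda_\alpha^{\pm 2p}$:
\begin{equation*}
|\widehat{\Phi\diamond\Psi}(\gamma)|^2
\le\Big(\sum_{\alpha\subset\gamma}\lambda_\alpha^{-2p}|\widehat{\Phi}(\alpha)|^2\Big)
\Big(\sum_{\alpha\subset\gamma}\lambda_\alpha^{2p}|\widehat{\Psi}(\gamma\setminus\alpha)|^2\Big).
\end{equation*}
The first factor is at most $\|\Phi\|_{-p}^2$ by Lemma~\ref{lem-2-6}; in the second, use (i) on $\widehat{\Psi}$ together with $\lambda_\alpha\lambda_{\gamma\setminus\alpha}=\lambda_\gamma$ to bound each summand by $\|\Psi\|_{-p}^2\lambda_\gamma^{2p}$, so the second factor is at most $2^{|\gamma|}\|\Psi\|_{-p}^2\lambda_\gamma^{2p}$. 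The crucial combinatorial lemma I will insert is $\lambda_\gamma\ge 2^{|\gamma|-1}$ (since every factor $1+k$ in the product defining $\lambda_\gamma$ is at least $2$, except possibly the factor $1+0=1$), hence $2^{|\gamma|}\le 2\lambda_\gamma$. Combining yields
\begin{equation*}
|\widehat{\Phi\diamond\Psi}(\gamma)|\le \sqrt{2}\,\|\Phi\|_{-p}\|\Psi\|_{-p}\lambda_\gamma^{p+\frac12},
\end{equation*}
so, evaluating at $\gamma=\sigma\bigtriangleup\tau$ and using submultiplicativity, $|\widehat{\int_\Sigma\Phi\diamond\Psi d\pi_0}(\sigma,\tau)|\le\sqrt{2}\,\|\Phi\|_{-p}\|\Psi\|_{-p}\lambda_\sigma^{p+\frac12}\lambda_\tau^{p+\frac12}$. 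Applying Theorem~\ref{thr-3-2} with $p'=p+\tfrac12$ requires $q>p+1$, and the choice $q=p+2$ delivers the first half of the claim. The main obstacle, as indicated, is the bookkeeping in the Wick case: the key insight is that the $2^{|\gamma|}$ inflation from the sum is exactly absorbed by a single extra power of $\lambda_\gamma$, which explains the asymmetry ($p+2$ versus $2p+1$) between the two regularities.
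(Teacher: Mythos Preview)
Your proof is correct. The convolution case is essentially identical to the paper's argument: both establish $|\widehat{\Phi\ast\Psi}(\gamma)|\le\|\Phi\|_{-p}\|\Psi\|_{-p}\lambda_\gamma^{2p}$ and then invoke Theorem~\ref{thr-3-2} (the paper routes this through Theorem~\ref{thr-5-3}, which is just the same step packaged as ``$\Phi\ast\Psi\in\mathcal{S}_{2p}^*$'').

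For the Wick product, however, your route genuinely differs. The paper does \emph{not} aim at a pointwise bound on $\widehat{\Phi\diamond\Psi}(\gamma)$; instead it proves the $\ell^2$ statement $\Phi\diamond\Psi\in\mathcal{S}_{p+1}^*$ by writing $\lambda_\gamma^{-p}\widehat{\Phi\diamond\Psi}(\gamma)=\sum_{\alpha\subset\gamma}\big(\lambda_\alpha^{-p}\widehat{\Phi}(\alpha)\big)\big(\lambda_{\gamma\setminus\alpha}^{-p}\widehat{\Psi}(\gamma\setminus\alpha)\big)$ and applying Cauchy--Schwarz so that \emph{both} factors are controlled by the full norms $\|\Phi\|_{-p}$, $\|\Psi\|_{-p}$; the remaining $\lambda_\gamma^{-2}$ then sums over $\gamma$. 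This symmetric splitting avoids the $2^{|\gamma|}$ inflation altogether and needs no combinatorial input. Your asymmetric splitting produces the $2^{|\gamma|}$ factor, which you then absorb via the observation $2^{|\gamma|}\le 2\lambda_\gamma$. The payoff of your approach is a slightly sharper pointwise exponent ($p+\tfrac12$ versus the paper's effective $p+1$), though both land safely at $q=p+2$; the payoff of the paper's approach is that it yields the cleaner intermediate fact $\Phi\diamond\Psi\in\mathcal{S}_{p+1}^*$ without any side lemma.
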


\begin{proof}
According to Lemma~\ref{lem-2-6}, we have
\begin{equation*}
  \sum_{\tau\in \Gamma}\lambda_{\tau}^{-2p}\big|\widehat{\Phi}(\tau)\big|^2
  = \sum_{\tau\in \Gamma}\lambda_{\tau}^{-2p}|\langle\!\langle \Phi, Z_{\tau}\rangle\!\rangle|^2
  = \|\Phi\|_{-p}^2
  < \infty.
\end{equation*}
Similarly, we also have $\sum_{\tau\in \Gamma}\lambda_{\tau}^{-2p}\big|\widehat{\Psi}(\tau)\big|^2= \|\Psi\|_{-p}^2<\infty$.
Using these relations, we find
\begin{equation*}
\begin{split}
  \sum_{\sigma\in \Gamma}\lambda_{\sigma}^{-2(p+1)}|\langle\!\langle \Phi\diamond\Psi, Z_{\sigma}\rangle\!\rangle|^2
   & = \sum_{\sigma\in \Gamma}\lambda_{\sigma}^{-2(p+1)}\big|\widehat{\Phi\diamond\Psi}(\sigma)\big|^2\\
   & = \sum_{\sigma\in \Gamma}\lambda_{\sigma}^{-2}
       \Big|\sum_{\tau\subset \sigma}\lambda_{\tau}^{-p}\widehat{\Phi}(\tau)\lambda_{\sigma\setminus\tau}^{-p}\widehat{\Psi}(\sigma\setminus\tau)\Big|^2\\
   &\leq \sum_{\sigma\in \Gamma}\lambda_{\sigma}^{-2}
        \Big[\sum_{\tau\subset \sigma}\lambda_{\tau}^{-2p}|\widehat{\Phi}(\tau)|^2\Big]
        \Big[\sum_{\tau\subset \sigma}\lambda_{\sigma\setminus\tau}^{-2p}|\widehat{\Psi}(\sigma\setminus\tau)|^2\Big]\\
   &\leq \|\Phi\|_{-p}^2 \|\Psi\|_{-p}^2\sum_{\sigma\in \Gamma}\lambda_{\sigma}^{-2}\\
   & < \infty,
\end{split}
\end{equation*}
which, together with Lemma~\ref{lem-2-6}, implies that $\Phi\diamond\Psi\in S_{p+1}^*$. Thus, by using Theorem~\ref{thr-5-3}, we come to
the relation $\int_{\Sigma}\Phi\diamond\Psi d\pi_0\in \mathfrak{L}(\mathcal{S}_{p+2},\mathcal{S}_{p+2}^*)$.
Next, we prove the second relation of (\ref{eq-5-7}). In fact, we have
\begin{equation*}
 \begin{split}
  \sum_{\sigma\in \Gamma}\lambda_{\sigma}^{-4p}|\langle\!\langle \Phi\ast\Psi, Z_{\sigma}\rangle\!\rangle|^2
      & = \sum_{\sigma\in \Gamma}\lambda_{\sigma}^{-4p}\big|\widehat{\Phi\ast\Psi}(\sigma)\big|^2\\
      & = \sum_{\sigma\in \Gamma}\lambda_{\sigma}^{-2p}\big|\widehat{\Phi}(\sigma)\big|^2\lambda_{\sigma}^{-2p}\big|\widehat{\Psi}(\sigma)\big|^2\\
      & \leq \|\Phi\|_{-p}^2\|\Psi\|_{-p}^2,
\end{split}
\end{equation*}
which, together with Lemma~\ref{lem-2-6}, implies that $\Phi\ast\Psi\in S_{2p}^*$, which together with Theorem~\ref{thr-5-3} gives
the relation $\int_{\Sigma}\Phi\ast\Psi d\pi_0 \in \mathfrak{L}(\mathcal{S}_{2p+1},\mathcal{S}_{2p+1}^*)$.
\end{proof}

\section*{Acknowledgement}
The authors are extremely grateful to the referees for their valuable comments and suggestions on improvement
of the first version of the present paper.
This work is supported by National Natural Science Foundation of China (Grant No. 11861057).

\end{document}